\documentclass[12pt]{amsart}
\usepackage[dvips]{graphicx}
\usepackage{a4wide}
\usepackage{amsmath}
\usepackage{euscript}
\usepackage{amssymb}
\usepackage[shortlabels]{enumitem}
\usepackage{amsthm}
\usepackage{amsopn}
\usepackage{multicol}
\usepackage{color}
\usepackage{mathdots}
\usepackage{multirow}
\usepackage[usenames,dvipsnames]{xcolor}
\usepackage[margin=0.8in]{geometry}

\usepackage{graphicx}
\usepackage{caption}
\usepackage{tikz}
\usepackage{booktabs}

\usepackage{caption}
\usepackage{subcaption}

\usepackage{soul}
\usepackage{mathrsfs}
\usepackage{amsfonts}
\usepackage[fixamsmath]{mathtools}
\usepackage[Bourbaki-arrow]{dynkin-diagrams}
\usepackage{mathdots}
\usepackage{stmaryrd}  
\usepackage{mathdots}

\usepackage{latexsym} 
\usepackage{amsthm}
\usepackage{stackrel}
\usepackage{amscd} 
\usetikzlibrary{decorations.markings}
\usetikzlibrary{shapes,arrows}
\usetikzlibrary{arrows.spaced}
\usepackage{tikz-cd}
\usepackage{hyperref}

\hypersetup{%
    pdftitle={  },
    pdfauthor={Claire Frechette, Solomon Friedberg, and Henrik P. A. Gustafsson},
    linktocpage=true,           
    colorlinks=true,            
    linkcolor=blue!75!black,    
    citecolor=green!50!black,   
    filecolor=magenta,          
    urlcolor=blue!75!black      
}

\setlist[itemize]{label={$\vcenter{\hbox{\tiny$\bullet$}}$}, leftmargin=2\parindent, itemsep=0.5em}

\usepackage[protrusion=true,expansion=true]{microtype}
\usepackage{soul}

\newcounter{Todo}

\newcommand{\zz}{\mathbf{z}}
\newcommand{\nd}{\mathrm{nd}}
\newcommand{\iso}{\cong}
\DeclareMathOperator{\SL}{SL}
\DeclareMathOperator{\SU}{SU}

\newenvironment{smallcases}{\left\{\begin{smallmatrix*}[l]}{\end{smallmatrix*}\right.}

\newtheorem{Thm}{Theorem}[section]	
\newtheorem{Prop}[Thm]{Proposition}	
\newtheorem{Lemma}[Thm]{Lemma}	
\newtheorem{Cor}[Thm]{Corollary}

\theoremstyle{definition} 
\newtheorem{Def}[Thm]{Definition}	
\newtheorem{Remark}[Thm]{Remark}
\newtheorem{Ex}[Thm]{Example}

\begin{document}

\title{Iwahori Whittaker functions for unramified quasi-split groups}
\author{Claire Frechette}
\address{Department of Mathematics and Mathematical Statistics, Umeå University, SE-901 87 Umeå, Sweden}
\email{claire.frechette@umu.se}

\author{Solomon Friedberg}
\address{Department of Mathematics, Boston College, Chestnut Hill, MA 02467, USA}
\email{solomon.friedberg@bc.edu}

\author{Henrik P. A. Gustafsson}
\address{Department of Mathematics and Mathematical Statistics, Umeå University, SE-901 87 Umeå, Sweden}
\email{henrik.gustafsson@umu.se}

\begin{abstract}
Let $\mathbf{G}$ be a connected unramified quasi-split reductive
group defined over a nonarchimedean local field $F$. 
This paper concerns the Iwahori Whittaker functions attached to the unramified principal series of $\mathbf{G}(F)$.
We introduce Demazure--Whittaker operators and show that in the non-split case they 
generate a Hecke algebra with unequal parameters that are determined by $\mathbf{G}$. 
We then connect these operators to the Whittaker functions, establishing formulas for calculating all values for a basis of Iwahori Whittaker functions and giving a new proof of the Casselman--Shalika formula
for the spherical Whittaker function. In doing so we  
build on the work for split groups
of Brubaker, Bump and Licata; the new feature here is the appearance of a Hecke algebra with 
unequal parameters.
\end{abstract}

\keywords{Quasi-split reductive group, Whittaker function, Iwahori Whittaker function,
Demazure operator, intertwining operator, unequal parameter Hecke algebra.}
\subjclass{22E50. Secondary: 20C08, 20G25, 33D80}

\maketitle

\section{Introduction}

Let $\mathbf{G}$ be a connected unramified quasi-split reductive
group defined over a nonarchimedean local field $F$.   
Let $K$ be the hyperspecial maximal compact subgroup of $\mathbf{G}(F)$.  If $\pi$ is an unramified principal series, 
then all values of the spherical (with respect to $K$) Whittaker function for $\pi$ are easily determined from its 
values on the maximal split torus $A$ of $\mathbf{G}(F)$.  Those values are given by the Casselman--Shalika formula \cite{CS80}.  
Using the Iwahori decomposition it is natural to decompose the spherical Whittaker function for $\pi$ as a sum of finer objects, 
the Iwahori Whittaker functions. This paper computes all values for a basis of Iwahori Whittaker functions for $\pi$.

In a different direction, in order to study unipotent representations in the quasi-split case, 
Lusztig \cite[Section 4]{Lus89} introduced Demazure operators acting on the ring of regular functions on the dual of $A$,  
$\mathcal{O}(\hat{A})$, and he developed the notion of Hecke algebras with unequal parameters \cite{Lus03} to encapsulate their properties. 
See also Macdonald \cite{Mac03}.  

The goal of this note is to join these topics.  Here we define Demazure--Whittaker operators $T_i$ arising from the interwining operators attached to the simple roots operating on the unramified principal series and develop their properties.  
In particular we show directly that when $\mathbf{G}$ is quasi-split but not split they form a Hecke algebra with unequal parameters
that are determined by $\mathbf{G}$ (Theorem~\ref{thm:hecke-algebra}).
We then connect these operators to the values of the Iwahori Whittaker functions as follows.
From the Iwahori decomposition, it suffices to determine these values on the cosets $Aw$ where $w\in K$ runs over a set of
representatives for the Weyl group.  
We use the Demazure--Whittaker operators 
to establish recursive formulas for these values (Theorem~\ref{thm:recursion}). 
Following this, we define classical Demazure operators $D_i$ as the quotient of $T_i+1$ by a normalizing factor.  We  give their relations,
and then use them to give an independent proof of the Casselman--Shalika formula. 

The relationship between Demazure operators and Whittaker functions for split groups has been developed in the work of 
Brubaker, Bump and Licata \cite{BBL} and for covers of such groups in Chinta, Gunnells and Pusk\'as \cite{CGP}.  In this paper 
we show how the method may be extended to the quasi-split case by taking into account the structure of the unequal-parameter 
Hecke algebras that appear.  

The original proof of Casselman--Shalika worked with the Casselman basis (\cite{C80}, \S3), which is
dual to the one used here with respect to a certain functional based on intertwining operators; 
the transition matrix between that basis and the one used here is complicated and involves Kazhdan--Lusztig polynomials (Reeder \cite{Re}, \S5).
A formula for the 
Iwahori Whittaker functions for the Casselman basis (in the split case) was given by Reeder \cite{Re}.
Another proof of the Casselman--Shalika formula for quasi-split groups, using different methods, was given
by Gurevich and Karasiewicz~\cite{GK}.

One can also model these Whittaker functions, including the Iwahori Whittaker functions,
by solvable lattice models and relate them to representations of quantum groups.  We will give this description in a separate paper
that is joint with Carl Westerlund. Our goal in this note is to indicate that a body of work for split groups may be extended to the quasi-split case by taking Lusztig's construction into account.  We remark that it seems likely that the connection between unique functionals for $p$-adic groups and Hecke algebra modules \cite{BBL} can be ported over as well.

In Section~\ref{sec:groups} we give group-theoretic preliminaries.  In Section~\ref{sec:IW} we define the Iwahori Whittaker functions and give some 
of their properties. In Section~\ref{sec:Ti} we introduce intertwining operators and define the Demazure--Whittaker operators.
We explain how these operators allow one to evaluate the Iwahori Whittaker functions recursively, and we
prove that these operators generate an unequal parameter Hecke algebra. In Section~\ref{sec:Di}
we pass from the Demazure--Whittaker operators to the classical Demazure operators, develop their properties,
and use them to give a new proof of the Casselman--Shalika formula.
\\

\noindent \emph{Acknowledgments:} Claire Frechette is supported by NSF grant DMS-2203042, Kempe Foundation grant JCSMK24-0117, and KVA grant MA2025-0079. Solomon Friedberg is supported by NSF grant DMS-2401309. The authors thank Jayce Getz
and Spencer Leslie for helpful conversations.

\section{Group-Theoretic Preliminaries}\label{sec:groups}

Let $\mathbf{G}$ be a connected reductive group defined over a nonarchimedean local field $F$ which is unramified, i.e.,\
$\mathbf{G}$ is quasi-split over $F$ and split over an unramified extension $E$ of $F$.  Let $\mathbf{B}$ be a minimal parabolic subgroup of $\mathbf{G}$,
$\mathbf{N}$ the unipotent radical of $\mathbf{B}$, $\mathbf{N}^-$ the opposite unipotent radical, and 
$\mathbf{A}\subset \mathbf{B}$ be a maximal $F$-split torus.
Let $\mathbf{M}=Z_\mathbf{G}(\mathbf{A})$ be the centralizer of $\mathbf{A}$ in $\mathbf{G}$. It is a maximal torus since $\mathbf{G}$ is quasi-split. 
We denote the corresponding groups of $F$-points as $G$, $B$, $N$, $A$ and $M$. 
We will sometimes write $N^+$ explicitly for $N$ when contrasting with $N^-$.

Let $\Sigma$ denote the root system of $\mathbf{G}$ with respect to $\mathbf{A}$, which is sometimes called the relative root system,
and let $W$ denote the Weyl group of $\Sigma$.
The root system $\Sigma$ need not be reduced.  Let $\Sigma^{\nd} = \left\{ \alpha \in \Sigma, \frac{\alpha}{2} \not\in \Sigma
\right\}$ be the set of non-divisible roots in $\Sigma$, which is a reduced root system.
We will denote the set of positive roots in $\Sigma^\nd$ by $\Sigma^\nd_+$.

Let $\mathfrak{o}$  be the ring of integers of $F$, $\mathfrak{p}$ be the prime idea of $\mathfrak{o}$,  $\varpi $ be a uniformizer of $\frak{o}$, and $q=|\mathfrak{o}/\mathfrak{p}|$.

Let $K$ be a hyperspecial maximal compact subgroup of $G$ which is in good position with respect to $\mathbf{A}$. Then, with $A_\mathfrak{o} := A \cap K$ and $M_\mathfrak{o} := M \cap K$, we have that $A/ A_\mathfrak{o} \iso M / M_\mathfrak{o}$ and $M = A M_\mathfrak{o}$. 
For each element in the Weyl group $W$, choose a representative in $K$. This choice of representatives will not affect any subsequent arguments since the functions considered are induced from unramified characters. Let $w_0$ be the long Weyl element and, by abuse of notation, its representative in $K$.

Let $J_+\subset B$ be the standard Iwahori subgroup of $K$, i.e.,\ the preimage of $B$ with respect to the canonical projection modulo $\mathfrak{p}$.
As detailed below, it will be more convenient to work with $J = w_0 J_+ w_0$ instead. Let $\delta$ be the modular quasicharacter of $B$. 

Let $\Lambda := X_*(\mathbf{A}) \iso X^*(\widehat{\mathbf{A}})$ where $\widehat{\mathbf{A}}$ is the dual split torus.
If $n$ is the rank of $\mathbf{A}$ then, $\widehat{\mathbf{A}}(\mathbb{C}) \iso (\mathbb{C}^\times)^n$ which we parametrize by $\mathbf{z} = (z_1,...,z_n)$ with $z_i \in \mathbb{C}^\times$. 
For $\lambda \in \Lambda$ we write $\mathbf{z}^\lambda := \lambda(\mathbf{z}) \in \mathbb{C}^\times$, the application of $\lambda \in X^*(\widehat{\mathbf{A}})$ on the dual complex torus element $\mathbf{z} \in \widehat{\mathbf{A}}(\mathbb{C})$.
Let $\mathbb{C}[\Lambda]$ be the ring of Laurent polynomials over $\mathbb{C}$ spanned by the monomials $\zz^\lambda$ with $\lambda \in \Lambda$ and let $\mathbb{C}(\Lambda)$ be the field of fractions.
Here we abuse notation and think of $\zz$ as both formal variables and elements of $\widehat{\mathbf{A}}(\mathbb{C})$.
In fact, we may identify $\mathbb{C}[\Lambda]$ with the ring of regular functions on $\widehat{\mathbf{A}}(\mathbb{C})$ and $\mathbb{C}(\Lambda)$ with the field of rational functions.
The Weyl group $W$ acts on $\mathbb{C}[\Lambda]$ by $w \cdot \zz^\lambda = \zz^{w \lambda}$.

At the same time we view $\lambda$ as a cocharacter on $A$ and write $\varpi^\lambda := \lambda(\varpi) \in A$.
We use these two dual notations of $\lambda \in \Lambda$ to parametrize unramified complex characters on $M$ as follows.
A complex character $\chi$ of $M$ is called unramified if it is trivial on $M_\mathfrak{o}$.
Thus, since $M/M_\mathfrak{o} \iso A/A_\mathfrak{o} \iso X_*(\mathbf{A}) \iso X^*(\widehat{\mathbf{A}})$, we can identify $\chi$ by an element $\zz \in \widehat{\mathbf{A}}(\mathbb{C}) \cong (\mathbb{C}^\times)^n$.
Specifically, we let $\chi_\zz$ be the unramified character of $M$ defined by $\chi_\zz(\varpi^\lambda A_\mathfrak{o}) = \zz^\lambda$ for all $\lambda \in \Lambda$. 

Construct the normalized unramified principal series representation 
\[(\pi_\zz,I(\zz)) := \text{Ind}_B^G(\chi_\zz)\]
where $\chi_\zz$ is trivially inflated to a character on $B$.
Let $\Phi_\circ^\zz \in I(\zz)^K$ be the unique spherical vector whose value on the identity is 1; thus  $\Phi_\circ^\zz(bk) = \delta^{1/2}\chi_\mathbf{z}(b)$ with $b \in B$ and $k \in K$. 
 
For $\alpha \in \Sigma^{\text{nd}}$, define $q_\alpha$ and $q_{\alpha/2}$ 
as in Macdonald \cite{Mac03}, Section 3.1 (see also Casselman \cite[Remark~1.1]{C80}). 
By \cite[(3.1.1)]{Mac03}, $q_{w\alpha} = q_\alpha$ for any $w \in W$, and similarly for $q_{\alpha/2}$, so it is enough to describe $q_\alpha$ and $q_{\alpha/2}$ for simple roots $\alpha$.
Casselman--Cely--Hales \cite{CCH} is a convenient source clarifying nuances in \cite{C80} and describes them as follows. 
For a root $\alpha$ in $\Sigma^\nd$, 
\begin{equation}
  \label{eq:qalpha}
  q_{\alpha} = q^{b_\alpha} \hspace{0.5cm}  \text{and} \hspace{0.5cm} 
  q_{\alpha / 2} = \begin{cases} q^{2b_\alpha} & \text{if } 2 \alpha \in \Sigma\\ 1 & \text{otherwise},\end{cases}
\end{equation}
where $b_\alpha$ the number of connected components in the Dynkin diagram of a specific Levi subgroup $M_{[\alpha]}$ of the split form of $\mathbf{G}$ over $E$ (see \cite[p. 6171]{CCH}) defined as follows. 
The split form is described by a root system called the absolute root system and the relative root system $\Sigma$ is recovered from this by identifying Galois orbits of roots as illustrated in a Tits diagram $\mathfrak{T}$.
Two examples are presented in Figure~\ref{fig:runningex} where the two Tits diagrams show the absolute root systems $D_4$ and $A_{2n}$ together with their Galois orbits in blue, and these descend to relative root systems $\Sigma$ of types $G_2$ and $BC_n$ respectively.
The Levi subgroup $M_{[\alpha]}$ is then defined by the simple roots in the Galois orbit of $\alpha$ and $b_\alpha$ is the number of connected components of its Dynkin diagram.

Note that in the case of simple roots for groups $\SL_2$ and $\SU_3$, the definitions \eqref{eq:qalpha} recover the familiar form of \cite{CS80}. The values 
of $q_\alpha$ and $q_{\alpha/2}$ are also computed in detail in \cite{Soll} for all quasi-split reductive groups. 
The examples in Figure~\ref{fig:runningex}  illustrate the nuances intrinsic to the original Macdonald and Casselman--Shalika definitions, and as we explain below, the appearance of unequal parameter Hecke algebras in the quasi-split context. We remark that $^2A_{2n}$ is the only type of absolutely simple quasi-split group for which $q_{\alpha/2}$ is not identically 1.
\begin{figure}[h]
\begin{tikzpicture}[xscale=0.85, baseline=-0.5cm, dot/.style = {circle, fill=black, minimum width = 6pt, inner sep=0pt}, every label/.append style = {text depth = 3pt}]
  \draw[every node/.style={dot}] (1,0) -- (0,0) node{};
  \draw[every node/.style={dot}] (0,0.75) node{} -- (1,0) node{} -- (0,-0.75) node{};
 
  \draw[-Stealth, shorten <= 6pt, shorten >= 6pt, blue] (0,0)--(0,-0.75);
  \draw[-Stealth, shorten <= 6pt, shorten >= 6pt, blue] (0,0.75)--(0,0);
  \draw[-Stealth, shorten <= 6pt, shorten >= 6pt, blue] (0,-0.75) [out=90+45,in=-90-45,looseness=1] to (0,0.75);
  \node at (0.5,1.25) {$^3D_4$};

  \node at (-1.25,0) {$\mathfrak{T}$};
  \node[below] at (0,-1) {$\alpha_1$};
  \node[below] at (1,-1) {$\alpha_2$};
  \dynkin[shift={(0,-2)}, edge length=1cm, root radius=3pt] G2;
  \node at (-1.25,0) {$\Sigma^{\nd}$};
\end{tikzpicture}
\hfill
\begin{tikzpicture}[xscale=0.85, baseline=-0.5cm, dot/.style = {circle, fill=black, minimum width = 6pt, inner sep=0pt}, every label/.append style = {text depth = 3pt}]
  \begin{scope}[shift={(0,0.5)}]
  \draw (0,0) node[dot]{} -- (1,0) node[dot]{};
  \draw (2,0) node[dot]{} -- (3,0) node[dot]{} -- (4,0) node[dot]{};
  \draw (0,-1) node[dot]{} -- (1,-1) node[dot]{};
  \draw (2,-1) node[dot]{} -- (3,-1) node[dot]{} -- (4,-1) node[dot]{};
  \draw (4,0) [out=0, in=0, looseness=2.25] to (4,-1);
  \draw[dashed] (1,0)--(2,0);
  \draw[dashed] (1,-1)--(2,-1);
  \foreach \x/\n in {0/$\alpha_1$,1/$\alpha_2$,2/$\cdots$,3/$\alpha_{n-1}$,4/$\alpha_n$}{
    \draw[Stealth-Stealth, shorten <= 6pt, shorten >= 6pt, blue] (\x,0)--(\x,-1);
    \node[below] at (\x,-1.5) {\n};
  }
\end{scope}
\node at (2,1.25) {$^2A_{2n}$};

\begin{scope}[shift={(0,-2)}]
\draw (0,0) node[dot]{} -- (1,0) node[dot]{};
\draw[dashed] (1,0) -- (2,0);
\draw (2,0) node[dot]{} -- (3,0) node[dot]{};
\dynkin[shift={(3,0)}, edge length=1cm, root radius=3pt] B{2};
\end{scope}
\end{tikzpicture}
\hfill
\begin{tabular}{cccccc}
  \toprule
  Tits diagram & $\Sigma$ & $\Sigma^\nd$ & Root & $q_\alpha$ & $q_{\alpha/2}$\\ 
  \midrule
  \multirow{2}{2em}{$^3D_4$} & \multirow{2}{*}{$G_2$} & \multirow{2}{*}{$G_2$} &$\alpha_1$ {\tiny(short)} &$q^3$& 1\\ \cmidrule{4-6}
  & & & $\alpha_2$ {\tiny(long)} &$q$&1\\ \midrule 
  \multirow{2}{2em}{$^2A_{2n}$} & \multirow{2}{*}{$BC_n$} & \multirow{2}{*}{$B_n$} &$\alpha_n$ {\tiny(short)} &$q$ &$q^2$\\ \cmidrule{4-6}
  & & & $\alpha_{i<n}$ {\tiny(long)} & $q^2$ &$1$\\
  \bottomrule
\end{tabular}
\caption{\label{fig:runningex}The Tits diagrams $\mathfrak{T}$ for $^3D_4$ and $^2A_{2n}$ along with their associated relative root system $\Sigma$, non-divisible roots $\Sigma^\nd$ and their Dynkin diagrams, and values of $q_\alpha$ and $q_{\alpha/2}$.
The Galois orbits are shown with blue arrows.}
\end{figure}

\section{Construction of the Iwahori Whittaker Functions}\label{sec:IW}

We now detail the construction of the Iwahori Whittaker functions, originally defined in \cite{CS80}, 
following the conventions used in \cite{BBBG24}. In particular, these include:
\begin{itemize}
  \item Instead of defining the Whittaker function as an integral over the upper unipotent radical~$N$, we define it as an integral over the lower unipotent $N^-$.
    Let $\psi_\mathrm{CS}$ be a principal unramified character of $N$ as defined by Casselman--Shalika in \cite[p.~209]{CS80}.
  Instead, we use the character $\psi(n) = \psi_\mathrm{CS}(w_0nw_0)$ on $N^-$.
\item Likewise, Iwahori-fixed vectors are defined with respect to the lower Iwahori subgroup $J$.
\item While \cite{CS80} defines Whittaker functions on the principal series representation $I(\zz)$, we work with Whittaker functions on the contragredient representation $I(\zz^{-1})$ induced from the upper minimal parabolic $B$.
\end{itemize}
As noted in \cite{BBBG24}, these conventions remove the long word in the Whittaker integral and simplify many calculations.

Let $\Omega_{\zz} : I(\zz) \to \mathbb{C}$ be the unique Whittaker functional that maps the spherical vector $\Phi_\circ^\zz$ to $1$ and transforms by 
$\psi$ with respect to $N^-$:
\[ \Omega_\zz(\pi_\zz(n)f)=\psi(n)\Omega_\zz(f),\qquad n\in N^{-}.\]
Then for $f \in I(\zz)$ supported in the big cell, we have \cite[Corollary~1.8]{CS80} 
\begin{equation}
  \label{eq:Whittaker-functional}
  \Omega_\zz(f) = \int_{N^-} f(n)\psi^{-1}(n)\,dn
\end{equation}
where the Haar measure on $N^-$ is normalized such that the volume of $N^- \cap K$ is 1.
By comparison with the intertwining operator,
the integral \eqref{eq:Whittaker-functional} converges absolutely in a cone in $\zz$ and may be continued to all complex parameters $\zz$.

By~\cite[Eq.~19]{C80}, the Iwahori--Bruhat decomposition gives $G=\bigsqcup_{w\in W} BwJ$.
We decompose the spherical vector $\Phi_\circ^\zz$ into Iwahori-fixed vectors supported on each Bruhat cell as $\Phi_\circ^\zz = \sum_{w \in W}\Phi_w^\zz$ defined by 
\begin{equation}
  \label{eq:Iwahori-basis}
  \Phi_w^\zz(bw'j) := \begin{cases} \Phi_\circ^\zz(b) & \text{ if } w=w'\\ 0 & \text{otherwise}\end{cases}\qquad \text{with $b\in B$,~$w'\in W$,~$j\in J$}.
\end{equation}
These form a basis of the space of Iwahori-fixed vectors $I(\zz)^{J}$ by \cite[Proposition~2.1]{C80}.
\begin{Def}
  \label{def:whittaker-function}
Define the Iwahori Whittaker function $\phi_w(\zz;g)$ for $w\in W$ as
\[ \phi_{w}(\zz; g) := \delta(g)^{1/2} \Omega_{\zz^{-1}}\bigl(\pi_{\zz^{-1}}(g)\Phi_w^{\zz^{-1}}\bigr) =  \delta(g)^{1/2} \int_{N^-} \Phi_w^{\zz^{-1}}(ng)\psi^{-1}(n)dn.\]
Similarly, define the spherical Whittaker function $\phi_{\circ}(\zz; g) := \delta(g)^{1/2}\Omega_{\zz^{-1}}\bigl(\pi_{\zz^{-1}}(g)\Phi_\circ^{\zz^{-1}}\bigr)$.
\end{Def}
Since $\Phi_\circ^\zz(g) = \sum_{w \in W} \Phi_w^\zz(g)$ by \eqref{eq:Iwahori-basis}, we have $\phi_\circ(\zz;g) = \sum_{w \in W} \phi_w(\zz;g)$.\\

Consider the $G$-dependence of these Iwahori Whittaker functions. Since $B = N M = N A M_\mathfrak{o}$ where $M_\mathfrak{o} = M \cap K \subset J$ and $w M_\mathfrak{o} w^{-1} = w M w^{-1} \cap w K w^{-1} = M \cap K = M_\mathfrak{o}$ we get that $G=\bigsqcup_{w\in W} N A M_\mathfrak{o} w J = \bigsqcup_{w\in W} N Aw J$.
Since $w_0 A w_0 = A$ we also get that $G = \bigsqcup_{w\in W} N^- A w J$.
By the above derivation we may furthermore quotient the $A$-factor by $A \cap K$ and choose representatives $\varpi^\lambda$ with $\lambda \in \Lambda$ for the cosets.
Thus, combining this decomposition with the transformation properties of the Whittaker function,  $\phi_w(\zz; g) = \psi(n) \phi_w(\zz; \varpi^\lambda w')$ where we can decompose $g = n \varpi^\lambda w' j$ with $n \in N^-$, $\lambda \in \Lambda$, $w' \in W$ and $j \in J$. 
To summarize, the Iwahori Whittaker function is completely determined by its values $\phi_w(\zz; \varpi^\lambda w')$ for $w,w' \in W$ and $\lambda \in \Lambda$ which we will compute in the remainder of the section.

The spherical Whittaker function $\phi_\circ(\zz; g) = \sum_{w \in W}\phi_w(\zz;g)$ is right-invariant under $K$ and therefore it is determined by its values $\phi_\circ(\zz; \varpi^{\lambda})$.
Because of our conventions it will be more natural to consider $\varpi^{-\lambda}$ than $\varpi^{\lambda}$ for both the spherical and Iwahori Whittaker functions.

For the remainder of this section we will compute the Iwahori Whittaker function values $\phi_{w}(\mathbf{z}; \varpi^{-\lambda} w)$ where the $w$ indexing the Iwahori-fixed vector and the $w$ in the argument are the same.
This leads to a particularly easy $p$-adic integral and we will then in Section~\ref{sec:Ti} use Deamzure--Whittaker operators to compute all values $\phi_{w}(\mathbf{z}; \varpi^{-\lambda} w')$ where the two Weyl elements can be different.

For the convenience of the reader we briefly review some aspects of Bruhat--Tits theory.
For any positive relative root $\alpha \in \Sigma_+$, let $N_\alpha$ be the subgroup of $N$ with Lie algebra $\mathfrak{g}_\alpha \oplus \mathfrak{g}_{2\alpha}$ where $\mathfrak{g}_{2\alpha} = \emptyset$ if $2\alpha \not\in \Sigma$. Similarly define $N_{-\alpha}$ as a subgroup of $N_-$.
There is a filtration $N_{\alpha,r}$ with $r \in \mathbb{R}$ such that $N_{\alpha,r} \subseteq N_{\alpha,r'}$ if and only if $r \geq r'$.
These are defined in \cite{C80} using the simply connected covering $\tilde{\mathbf{G}}$ of the derived group of $\mathbf{G}$ and the inverse image $\tilde N$ of $N$.
The paper constructs an associated Bruhat--Tits apartment $\mathcal{A}$ with a canonical homomorphism $N_G(A) \to \operatorname{Aut}(\mathcal{A})$, associated affine root system $\Sigma_\mathrm{aff}$ and finite, reduced root system $\Sigma_0$ referring to \cite{BT1, Mac03, Tits}.
This reduced Bruhat--Tits root system $\Sigma_0$ can be identified with a subset of $X^*(\mathbf{A}) \otimes_\mathbb{Z} \mathbb{R}$ which is in bijection with $\Sigma^{\nd}$ of relative roots by a map $\lambda : \Sigma \to \Sigma_0$ and in $X^*(\mathbf{A}) \otimes_\mathbb{Z} \mathbb{R}$ this map acts by a multiplication $\alpha \mapsto m_\alpha \alpha$ \cite{C80, Tamir}.
We will later (in Remark~\ref{rem:m_alpha}) show that 
\begin{equation}
  \label{eq:m_alpha}
  m_\alpha = d_\alpha = \begin{cases} 1 & \text{if } 2\alpha \not\in \Sigma \\ 2 & \text{if } 2\alpha \in \Sigma . \end{cases}
\end{equation}
Macdonald \cite{Mac03} shows that the definitions of $q_{\alpha/2} \neq 1$ and $2\alpha\not\in\Sigma$ precisely coincide. Note that these definitions are equivalent to Casselman--Shalika's notion of $\alpha$ being an $\SU_3$ root (in contrast to being an $\SL_2$ root) \cite{CS80}.

The distinction between these root systems is important because \cite{C80} defines $N_{\alpha,r}$ as the $G$-image of $\tilde N(\lambda(\alpha) + r) := \{ n \in \tilde N : nx = x \text{ for all } x \in \mathcal{A}, \lambda(\alpha)(x) + r \geq 0\}$. 
The group of translations $\Lambda \iso X_*(\mathbf{A})$ on the apartment $\mathcal{A}$ is isomorphic to $A/A_\mathfrak{o}$ via a homomorphism $\nu$ normalized such that $\nu(\varpi^{\mu})$ is the translation $-\mu$ (see for example Haines' notes \cite{Haines:intertwiners}).
The affine roots are affine functions on $x \in X_*(\mathbf{A}) \otimes_\mathbb{Z} \mathbb{R}$ so the translation $\nu(\varpi^{\mu})$ acts on $\tilde \alpha = \alpha + r$ by $(\nu(\varpi^\mu)\tilde \alpha)(x) = \tilde\alpha(\nu(\varpi^\mu)^{-1}x) = \tilde\alpha(x + \mu) = \alpha(x) + r + \alpha(\mu) = \alpha(x) + r + \langle \alpha, \mu \rangle$.
Replacing $\alpha$ by $\lambda(\alpha)$ we get
\begin{equation}
  \label{eq:muN}
  \varpi^\mu N_{\alpha,r} \varpi^{-\mu} = N_{\alpha, r +  \langle\lambda(\alpha), \mu\rangle} = N_{\alpha, r + d_\alpha \langle \alpha, \mu\rangle}.
\end{equation}
For $w$ in the finite Weyl group $W$ with representative in $K$ we get that 
\begin{equation}
  \label{eq:wN}
  w N_{\alpha,r} w^{-1} = N_{w\alpha,r}.
\end{equation}

For $w \in W$ let $J_w = w J w^{-1}$ which is also an Iwahori subgroup and we have the Iwahori factorization  (see \cite[Lemma~4.58]{Haines:Bernstein} for a convenient source which in turn references~\cite{BT2}).  
\begin{equation}
  \label{eq:Iwahori-factorization}
  J_w = (J_w \cap N^-) (J_w \cap A) (J_w \cap N^+)
\end{equation}
For the computation of the Iwahori Whittaker function we need the following description of~$J_w \cap N^-$.
\begin{Lemma}
  \label{lem:Iwahori-factorization}
  Let $w \in W$ and $J_w = w J w^{-1}$.
  Then,
  \begin{equation}
    \label{eq:JwNminus}
    J_w \cap N^- = \Bigl(\prod_{\substack{\alpha \in \Sigma^\nd_+ \\ w^{-1}\alpha > 0}} N_{-\alpha,0} \Bigr) \Bigl(\prod_{\substack{\alpha \in \Sigma^\nd_+ \\ w^{-1}\alpha < 0}} N_{-\alpha,1} \Bigr).
  \end{equation}
\end{Lemma}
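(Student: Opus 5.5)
We reduce to the standard description of the upper Iwahori subgroup $J_+$ and transport it by conjugation. By Bruhat--Tits theory (\cite{BT2}, \cite[Lemma~4.58]{Haines:Bernstein}), the standard Iwahori $J_+$, being the preimage of $B$ modulo $\mathfrak{p}$, is the fixer of the fundamental alcove. It factors as
\[
J_+ = \Bigl(\prod_{\alpha\in\Sigma^\nd_+} N_{-\alpha,1}\Bigr)\, A_\mathfrak{o}\, \Bigl(\prod_{\alpha\in\Sigma^\nd_+} N_{\alpha,0}\Bigr).
\]
Here the products may be taken in any fixed order. The root subgroups satisfy $N_{\beta,r}\subseteq J_+$ exactly when the affine root $\lambda(\beta)+r$ is nonnegative on the alcove. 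For $\beta>0$ this means $r\ge 0$, and for $\beta<0$ it means $r\ge 1$; the $2\alpha$ part is already absorbed into $N_{\pm\alpha}$, so only $\Sigma^\nd$ appears. In particular $J_+\cap N^-=\prod_{\alpha>0}N_{-\alpha,1}$ and $J_+\cap N^+=\prod_{\alpha>0}N_{\alpha,0}$.

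Next we pass to $J=w_0J_+w_0$ using \eqref{eq:wN}. Since $w_0$ sends positive roots to negative ones, $N_{\alpha,r}\mapsto N_{w_0\alpha,r}$ gives
\[
J=\Bigl(\prod_{\alpha>0}N_{-\alpha,0}\Bigr)A_\mathfrak{o}\Bigl(\prod_{\alpha>0}N_{\alpha,1}\Bigr).
\]
More intrinsically, $J$ is generated by $A_\mathfrak{o}$ together with the groups $N_{\beta,0}$ for $\beta<0$ and $N_{\beta,1}$ for $\beta>0$. Conjugating again by $w$ with \eqref{eq:wN}, $J_w$ is generated by $A_\mathfrak{o}$, the groups $N_{w\beta,0}$ with $\beta<0$, and the groups $N_{w\beta,1}$ with $\beta>0$. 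For a positive root $\alpha$, the subgroup $N_{-\alpha,r}$ therefore occurs with
\begin{itemize}
\item $r=0$ when $-\alpha=w\beta$ with $\beta<0$, i.e.\ when $w^{-1}\alpha>0$;
\item $r=1$ when $w^{-1}\alpha<0$.
\end{itemize}
This matches the two index sets in \eqref{eq:JwNminus}.

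The remaining step, which I expect to be the main obstacle, is to show that $J_w\cap N^-$ is exactly the product of these negative root subgroups, with no extra elements. I would argue as follows.
\begin{itemize}
\item $J_w$ is again an Iwahori subgroup, namely the fixer of the alcove $w\cdot C$. By \eqref{eq:Iwahori-factorization} together with the general Bruhat--Tits result \cite{BT2}, its intersection with $N^-$ equals $\prod_{\beta<0}N_{\beta,f(\beta)}$, where $f(\beta)$ is the least $r$ such that $\lambda(\beta)+r\ge 0$ on $w\cdot C$.
\item By the concave-function property, this product is a group and is independent of the ordering of the factors.
\item It then suffices to compute $f$. This follows from the description of $J$ above, because $N_{\beta,r}\subseteq J_w$ if and only if $N_{w^{-1}\beta,r}\subseteq J$, and for $J$ the thresholds are $0$ on negative roots and $1$ on positive roots.
\end{itemize}
One also checks that the $\mathfrak{g}_{2\alpha}$ components impose no additional condition. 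This holds because $N_{\alpha,r}$ is defined via $\lambda(\alpha)=d_\alpha\alpha$, and that normalization already incorporates the divisible root.
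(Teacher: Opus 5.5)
Your proof is correct, but you resolve the step you flag as the main obstacle by a different mechanism than the paper.

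\textbf{The paper's route.} It never passes to the alcove $w\cdot C$. It takes Casselman's factorization $J=\bigl(\prod_{\alpha>0}N_{-\alpha,0}\bigr)M_\mathfrak{o}\bigl(\prod_{\alpha>0}N_{\alpha,1}\bigr)$. It then writes $w^{-1}N^-w$ as the product of its $N^-$-part $\prod_{\alpha>0,\,w\alpha>0}N_{-\alpha}$ and its $N^+$-part $\prod_{\alpha<0,\,w\alpha>0}N_{-\alpha}$. Uniqueness of the decomposition in $N^-MN^+$ (via $N^-\cap N^+=\{1\}=N^\pm\cap M_\mathfrak{o}$) then lets it read off $J\cap w^{-1}N^-w$ factor by factor, and a final conjugation by $w$ using \eqref{eq:wN} gives the result. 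So ``no extra elements'' comes from an elementary uniqueness argument, and Bruhat--Tits input is needed only for the single Iwahori $J$.

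\textbf{Your route.} You conjugate $J$ first and invoke the general Bruhat--Tits theorem $P_\Omega\cap N^-=\prod_{\beta<0}N_{\beta,f_\Omega(\beta)}$ for $\Omega=w\cdot C$. The lemma then reduces to computing the thresholds for $J_w$, which you correctly obtain by conjugating those of $J$. This is shorter and more conceptual. The trade-off is that your black box already contains the structural content of the lemma for every alcove, whereas the paper derives the case of $J_w$ from the case of $J$.

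\textbf{A minor slip.} In the non-split case the torus part of $J_+$ and of $J$ is $M_\mathfrak{o}=M\cap K$, which is strictly larger than $A_\mathfrak{o}$ (already for $\SU_3$). Your displayed factorization of $J_+$ and your ``generated by'' description of $J$ therefore need $M_\mathfrak{o}$ in place of $A_\mathfrak{o}$. This does not affect the $N^-$-component, so your conclusion stands.
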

\begin{proof}
  From \cite[(1.5) and (1.23)]{C80} together with \eqref{eq:wN} we have that
  \begin{equation*}
    J = (J \cap N^-) M_\mathfrak{o} (J \cap N^+) = \Bigl(\prod_{\alpha \in \Sigma^\nd_+} N_{-\alpha,0} \Bigr) M_\mathfrak{o} \Bigl(\prod_{\alpha \in \Sigma^\nd_+} N_{\alpha,1}\Bigr).
  \end{equation*}

  We also have that $N^- = \prod_{\alpha \in \Sigma^\nd} N_{-\alpha}$ where $N_{\alpha} = \bigcup_{k \in \mathbb{Z}} N_{\alpha,k}$ which together with \eqref{eq:wN} gives that
  \begin{equation*}
    w^{-1} N^- w = w^{-1} \Bigl(\prod_{\substack{\alpha > 0 \\ w^{-1}\alpha > 0}} N_{-\alpha} \Bigr)\Bigl(\prod_{\substack{\alpha > 0 \\ w^{-1}\alpha < 0}} N_{-\alpha} \Bigr) w =
    \Bigl(\prod_{\substack{w\alpha > 0 \\ \alpha > 0}} N_{-\alpha} \Bigr)\Bigl(\prod_{\substack{w \alpha > 0 \\ \alpha < 0}} N_{-\alpha} \Bigr)
  \end{equation*}
  where all roots are in $\Sigma^\nd$.
  Since we have now expressed both $J$ and $w^{-1} N^- w$ as decomposition $X = (X \cap N^-) (X \cap M_\mathfrak{o})(X \cap N^+)$ and since $N^- \cap N^+ = \{1\} = N^\pm \cap M_\mathfrak{o}$ we get that
  \begin{equation*}
    J \cap w^{-1} N^- w = \Bigl(\prod_{\substack{w\alpha > 0 \\ \alpha > 0}} N_{-\alpha,0} \Bigr)\Bigl(\prod_{\substack{w \alpha > 0 \\ \alpha < 0}} N_{-\alpha,1} \Bigr)
  \end{equation*}
  and the statement now follows from \eqref{eq:wN}.
\end{proof}

\begin{Def} Let $\lambda \in \Lambda = X_*(\mathbf{A})$ be called $w$-\emph{almost-dominant} if for all simple roots $\alpha_i \in \Sigma^{\nd}$
\begin{equation*}
  \langle \alpha_i, \lambda \rangle \geq
\begin{cases}  0 & \text{if }w^{-1}\alpha_i > 0\\
-1 &\text{if } w^{-1}\alpha_i < 0.
\end{cases}
\end{equation*}
\end{Def}

\begin{Prop}
  \label{prop:base-case} 
  For $w\in W$, let $N^-_w =  wJw^{-1} \cap N^-$. Then
  \[ \phi_{w}(\zz;\varpi^{-\lambda}w)  = \begin{cases} \operatorname{vol}(N^-_w)\cdot \zz^{\lambda} & \text{if $\lambda$ is $w$-almost-dominant}\\0 &\text{otherwise.}\end{cases}\] 
  In particular, for $w=1$,
  \[ \phi_{1}(\zz;\varpi^{-\lambda})  = \begin{cases} \zz^{\lambda} & \text{if $\lambda$ is dominant}\\0 &\text{otherwise.}\end{cases}\]
\end{Prop}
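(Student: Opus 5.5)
We unfold the definition and reduce to an integral over a compact subgroup of $N^-$. By Definition~\ref{def:whittaker-function},
\[\phi_w(\zz;\varpi^{-\lambda}w)=\delta(\varpi^{-\lambda})^{1/2}\int_{N^-}\Phi_w^{\zz^{-1}}(n\varpi^{-\lambda}w)\psi^{-1}(n)\,dn .\]
Substituting $n=\varpi^{-\lambda}n'\varpi^{\lambda}$ changes the measure by a modulus factor. Pulling $\varpi^{-\lambda}$ out on the left through the $B$-equivariance of $\Phi_w^{\zz^{-1}}$ contributes $\delta^{1/2}(\varpi^{-\lambda})\chi_{\zz^{-1}}(\varpi^{-\lambda})=\delta^{1/2}(\varpi^{-\lambda})\zz^{\lambda}$. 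The Jacobian of conjugation on $N^-$ cancels the combined $\delta$-factors; this is bookkeeping, but the signs and conventions must be checked carefully. We are left with
\[\zz^{\lambda}\int_{N^-}\Phi_w^{\zz^{-1}}(n'w)\,\psi^{-1}(\varpi^{-\lambda}n'\varpi^{\lambda})\,dn'.\]

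Next we determine the support. We need those $n'\in N^-$ with $n'w\in BwJ$, i.e.\ $w^{-1}n'w\in w^{-1}BwJ$. By the uniqueness of the decomposition $G=\bigsqcup N^-AwJ$, or more directly by a standard argument with the Iwahori factorization \eqref{eq:Iwahori-factorization} of $J_w$, this should be equivalent to $n'\in J_w\cap N^-=N^-_w$. On that set $n'w=w(w^{-1}n'w)$ with $w^{-1}n'w\in J$, so $\Phi_w^{\zz^{-1}}(n'w)=\Phi_\circ(1)=1$. The step I expect to be the main obstacle is showing that $n'w\in BwJ$ forces $n'\in J_w$. I would argue as follows. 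Write $n'w=bwj$, so that $w^{-1}b^{-1}n'w\in J$ and hence $b^{-1}n'\in J_w$. Factor $b^{-1}n'$ via \eqref{eq:Iwahori-factorization} as $n_-an_+$ with $n_-\in J_w\cap N^-$. Then $b\,n_-=n'(n_+^{-1}a^{-1})$, which yields an equality in $BN^-$ versus $N^-B$ form. Uniqueness of the big-cell decomposition $N^-B$ then gives $n'=n_-$ up to the factor in $B\cap N^-=1$.

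Having reduced to
\[\phi_w(\zz;\varpi^{-\lambda}w)=\zz^{\lambda}\int_{N^-_w}\psi^{-1}(\varpi^{-\lambda}n\varpi^{\lambda})\,dn,\]
we observe that $n\mapsto\psi(\varpi^{-\lambda}n\varpi^\lambda)$ is a character of the group $N^-_w$. The integral is therefore $\operatorname{vol}(N^-_w)$ if this character is trivial and $0$ otherwise. Since $\psi$ is built from the simple root subgroups with conductor $\mathfrak{o}$ (by the Casselman--Shalika normalization transported by $w_0$), triviality only needs to be checked on $N_{-\alpha_i}$ for simple $\alpha_i$. Lemma~\ref{lem:Iwahori-factorization} gives the component of $N^-_w$ in $N_{-\alpha_i}$: it is $N_{-\alpha_i,0}$ if $w^{-1}\alpha_i>0$ and $N_{-\alpha_i,1}$ otherwise. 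By \eqref{eq:muN}, conjugating by $\varpi^{-\lambda}$ sends $N_{-\alpha_i,r}$ to $N_{-\alpha_i,r+d_{\alpha_i}\langle\alpha_i,\lambda\rangle}$, taking into account the sign of $-\alpha_i$ and the direction of the conjugation. The character $\psi$ restricted to $N_{-\alpha_i,s}$ is trivial exactly when $s\ge 0$. In the $\SU_3$ case the $2\alpha$ part lies in the kernel, and one must check that the filtration index is compatible with $d_\alpha$. This gives the conditions $\langle\alpha_i,\lambda\rangle\ge0$, respectively $\ge-1$, which is exactly $w$-almost-dominance. When the condition fails, one exhibits a nontrivial character on a compact group, so the integral vanishes.

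For $w=1$ we have $N^-_1=\prod N_{-\alpha,0}=N^-\cap K$, which has volume $1$ by the normalization, and $1$-almost-dominance is dominance. This gives the special case.
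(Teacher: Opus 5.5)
Your overall route is the paper's. You use the same unfolding and change of variables, the same support computation via the Iwahori factorization, and the same reduction to simple root subgroups through Lemma~\ref{lem:Iwahori-factorization} and \eqref{eq:muN}. Getting the vanishing from orthogonality of the character $n\mapsto\psi(\varpi^{-\lambda}n\varpi^{\lambda})$ on the compact group $N_w^-$, instead of the paper's invariance argument, is fine.

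The gap is at the one point where the non-split case differs. Your criterion that $\psi$ is trivial on $N_{-\alpha_i,s}$ exactly when $s\ge 0$ is wrong when $2\alpha_i\in\Sigma$. The correct threshold is $s\ge 1-d_{\alpha_i}$: for such a root, $\psi$ is trivial on $N_{-\alpha,-1}$ and first becomes nontrivial on $N_{-\alpha,-2}$. The reason is that, by \eqref{eq:muN}, conjugation by $\varpi^{\alpha^\vee/2}$ moves the filtration index by $2$ but rescales the $-\alpha$-coordinate by only one power of $\varpi$ (and the $-2\alpha$-coordinate by two). Hence $N_{-\alpha,-1}$ has the same image as $N_{-\alpha,0}$ modulo $N_{-2\alpha}$, and $\psi$ factors through that quotient.

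With your threshold, a simple root with $d_{\alpha_i}=2$ and $w^{-1}\alpha_i<0$ yields $1+2\langle\alpha_i,\lambda\rangle\ge 0$, i.e.\ $\langle\alpha_i,\lambda\rangle\ge 0$, not $\ge -1$. Concretely, take $\SU_3$ with $w=s_\alpha$ and $\lambda=-\alpha^\vee/2$, which is $w$-almost-dominant. Then $\varpi^{-\lambda}N_w^-\varpi^{\lambda}=N_{-\alpha,-1}$, so your argument predicts $0$, while the true value is $\operatorname{vol}(N_w^-)\zz^{\lambda}\neq 0$. So your conclusion ``$\ge 0$, respectively $\ge -1$'' does not follow from what precedes it. The check you deferred (``compatibility with $d_\alpha$'') is precisely the content of the non-split case. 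The paper uses $1-d_\alpha$ as the threshold, obtains $\langle\alpha,\lambda\rangle\ge(1-d_\alpha-r_\alpha)/d_\alpha$, and integrality of $\langle\alpha,\lambda\rangle$ then gives $w$-almost-dominance uniformly in $d_\alpha$.

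A smaller slip is in the support step you singled out. Factoring $b^{-1}n'=n_-an_+$ gives $bn_-=n'(n_+^{-1}a^{-1})$, which compares an element of $BN^-$ with one of $N^-B$. Uniqueness of the $N^-B$ decomposition therefore does not apply. In fact $n'=n_-$ fails in general: for $b\in A_\mathfrak{o}$ one gets $n'=bn_-b^{-1}$, even though $n'\in N_w^-$ is true. Factor in the opposite order instead. Inverting \eqref{eq:Iwahori-factorization} gives $J_w=(J_w\cap B)(J_w\cap N^-)$, so $b^{-1}n'=b_1n_1$. Then $n'n_1^{-1}=bb_1\in N^-\cap B=\{1\}$, so $n'=n_1\in N_w^-$. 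This is what the paper does by writing $n=bj_w^{-1}$ and factoring $j_w=n'b'$ with the $N^-$-factor first.
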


The proof takes some inspiration from \cite[Lemma~3.5, Proposition~3.6]{BBBG24}.
\begin{proof}
  We first prove that, for any Iwahori-fixed Whittaker function $\phi$ with respect to the character~$\psi$, $\phi(\varpi^{-\lambda}w)$ is zero unless $\lambda$ is $w$-almost dominant.
  For $n \in N^-$ we have that
  \begin{equation}
    \label{eq:phi-invariance}
    \psi(n)\phi(\varpi^{-\lambda}w) = \phi(n \varpi^{-\lambda}w) = \phi(\varpi^{-\lambda}w n')
  \end{equation}
  where $n' = w^{-1} \varpi^{\lambda} n \varpi^{-\lambda} w$.

  Consider the set of $n$ such that $n' \in J$.
  This is equivalent to $\varpi^{\lambda} n \varpi^{-\lambda} = w n' w \in w J w^{-1} \cap N^- = N_w^-$ which was described in Lemma~\ref{lem:Iwahori-factorization}.
  For every such $n$ we then get from \eqref{eq:phi-invariance} that $\psi(n)\phi(\varpi^{-\lambda}w) = \phi(\varpi^{-\lambda}wn') = \phi(\varpi^{-\lambda}w)$ since $n' \in J$.
  This means that for $\phi(\varpi^{-\lambda}w)$ to be non-zero we need $\psi(n) = 1$ for all $n$ such that $\varpi^{\lambda} n \varpi^{-\lambda} \in N_w^-$. 

  By Lemma~\ref{lem:Iwahori-factorization}, $N_w^- = \bigl(\prod_{\alpha >0, w^{-1}\alpha > 0} N_{-\alpha,0} \bigr) \bigl(\prod_{\alpha > 0,  w^{-1}\alpha < 0} N_{-\alpha,1} \bigr)$.
  Since $\psi$ is multiplicative it is equivalent to show that $\psi(n) = 1$ for all $\varpi^{\lambda} n \varpi^{-\lambda} \in N_{-\alpha,r_\alpha}$ for all simple roots $\alpha = \alpha_i$ of $\Sigma^\nd$ where
  \begin{equation*}
    r_\alpha = \begin{cases}
      0 & \text{if } w^{-1}\alpha > 0 \\
      1 & \text{if } w^{-1} \alpha < 0,
    \end{cases}
  \end{equation*}

  From \eqref{eq:muN} we get that $\varpi^{-\lambda} N_{-\alpha, r_\alpha} \varpi{\lambda} = N_{-\alpha, r_\alpha + d_\alpha \langle \alpha, \lambda \rangle}$ and the unramified character $\psi$ is defined in \cite{CS80} to be trivial on $N_{-\alpha, 1-d_\alpha}$ but not on $N_{-\alpha, -d_\alpha}$.

  Thus for $\phi(\varpi^{-\lambda}w)$ to be non-zero we need that $r_\alpha + d_\alpha \langle \alpha, \lambda \rangle \geq 1 - d_\alpha$ which is equivalent to
  \begin{equation*}
    \langle \alpha,\lambda \rangle \geq \frac{1-d_\alpha-r_\alpha}{d_\alpha} = 
    \begin{cases}
      -r_\alpha & \text{if } d_\alpha = 1\\
      \frac{-1-r_\alpha}{2} & \text{if } d_\alpha = 2
    \end{cases}
    = \begin{cases}
      0 & \text{if } d_\alpha = 1, w^{-1}\alpha > 0 \\
      -1 & \text{if } d_\alpha = 1, w^{-1}\alpha < 0 \\
      -\tfrac{1}{2} & \text{if } d_\alpha = 2, w^{-1}\alpha > 0 \\
      -1 & \text{if } d_\alpha = 2, w^{-1}\alpha < 0.
    \end{cases}
  \end{equation*}
  Since $\langle \alpha,\lambda \rangle$ is integral this is equivalent to $\langle \alpha,\lambda \rangle \geq \begin{smallcases} 0 & \text{if } w^{-1}\alpha > 0 \\ -1 & \text{if } w^{-1} \alpha < 0 \end{smallcases}$ irrespective of $d_\alpha$ for all simple roots $\alpha = \alpha_i$.
  We have thus shown that $\phi(\varpi^{-\lambda}w) \neq 0$ implies $\psi(\varpi^{-\lambda} N_w^- \varpi^{\lambda}) = 1$ which is equivalent to $\lambda$ being $w$-almost dominant. This concludes the first part of the proof and we will now compute $\phi_w(\mathbf{z}; \varpi^{-\lambda} w)$ for $\lambda$ $w$-almost dominant.

  By Definition~\ref{def:whittaker-function}
  \begin{equation*}
    \phi_w (\mathbf{z}, \varpi^{- \lambda} w) = \delta^{1 / 2} (\varpi^{-\lambda}) \int_{N^-} \Phi^{\mathbf{z}^{- 1}}_w (n \varpi^{- \lambda} w)\psi^{- 1} (n) d n.
   \end{equation*}
   A change of variables $n \mapsto \varpi^{-\lambda} n \varpi^{\lambda}$ multiplies the measure by $\delta(\varpi^\lambda)$ and since $\Phi^{\mathbf{z}^{-1}}_w \in I(\mathbf{z}^{-1})$ we have that $\Phi_w^{\mathbf{z}^{-1}}(\varpi^{-\lambda}n w) = (\delta^{1/2}\chi_{\mathbf{z}^{-1}})(\varpi^{-\lambda}) \Phi_w^{\mathbf{z}^{-1}}(nw)$ we get
   \begin{equation*}
     \phi_w (\mathbf{z}, \varpi^{- \lambda} w) =\mathbf{z}^{\lambda} \int_{N^-} \Phi^{\mathbf{z}^{- 1}}_w (n w) \psi^{- 1} (\varpi^{- \lambda} n \varpi^{\lambda}) d n.
   \end{equation*}
   We have that $\Phi^{\mathbf{z}^{- 1}}_w (n w) = 0$ unless $n w \in B w J$ by \eqref{eq:Iwahori-basis} where we recall that $n \in N^-$ while $B = B^+$.
   Thus, $n = b j_w^{-1}$ for some $b \in B$ and $j_w \in J_w$ which we, according to \eqref{eq:Iwahori-factorization}, can factorize as $j_w = n' b'$ with $n' \in J_w \cap N^-$ and $b' \in B$.
   Then $N^- \ni n n' = b (b')^{-1} \in B$ and since $N^- \cap B = \{1\}$ we get that $n = (n')^{-1} \in J_w \cap N^- = N_w^-$.
   Hence, $n w \in wJ$ and $b = 1$ which means that $\Phi_w^{\mathbf{z}^{-1}}(nw) = 1$ by \eqref{eq:Iwahori-basis} and
  \begin{equation*}
     \phi_w (\mathbf{z}, \varpi^{- \lambda} w) =\mathbf{z}^{\lambda} \int_{N_w^-} \psi^{- 1} (\varpi^{- \lambda} n \varpi^{\lambda}) d n.
   \end{equation*}
   We have shown that $\lambda$ being $w$-almost dominant is equivalent to $\psi(\varpi^{-\lambda} N_w^- \varpi^{\lambda}) = 1$ and are thus left with the volume integral
   \begin{equation*}
     \phi_w (\mathbf{z}, \varpi^{- \lambda} w) =\mathbf{z}^{\lambda} \int_{N_w^-} 1 \, d n = \operatorname{vol}(N_w^-) \, \mathbf{z}^\lambda.
   \end{equation*}
   
   For $w=1$, Lemma~\ref{lem:Iwahori-factorization} gives that $N_w^- = \prod_{\alpha >0} N_{-\alpha,0} = N^- \cap K$ which has volume $1$.
   The condition that $\lambda$ is $1_W$-almost dominant is equivalent to $\langle \alpha_i, \lambda \rangle \geq 0$ for all simple roots $\alpha_i$, that is, $\lambda$ is dominant. 
\end{proof}

\section{An unequal-parameter Hecke algebra of Demazure--Whittaker operators}
\label{sec:Ti}

Evaluations of Iwahori Whittaker functions for arbitrary elements of the Weyl group may be most easily computed using Demazure-type operators, which encapsulate the braiding properties of the Weyl group. In this section, we will define the operators $T_i$, derived from the action of intertwining operators on the principal series representations. We call these Demazure--Whittaker operators using the conventions of \cite{BBBG24} which treated the split case.
We will show that these generate a Hecke algebra with unequal parameters
$v_i := (q_{\alpha_i} q_{\alpha_i/2})^{-1}$ in Theorem~\ref{thm:hecke-algebra}.
The adjective \emph{unequal} is in this context traditionally interpreted as \emph{not necessarily equal}; when the group is split, the parameters $v_i$ are all equal.

We will now introduce some notation following Casselman and Casselman--Shalika \cite{C80, CS80}.
Let $\mathbf{z} \in \widehat{\mathbf{A}}(\mathbb{C})$.  For $\alpha\in\Sigma$, set
\begin{equation}
  \xi_\alpha(\zz) =
  \begin{cases}
    1-q_\alpha^{-1}\zz^{\alpha^\vee} & q_{\alpha/2}=1\\
    (1-q_\alpha^{-1}q_{\alpha/2}^{-1/2} \zz^{\alpha^\vee/2})(1+ q_{\alpha/2}^{-1/2}\zz^{\alpha^\vee/2}) & q_{\alpha/2}\neq1.
  \end{cases}
\end{equation}\label{eq:xi}
We then define 
\begin{equation}
  \eta_\alpha(\mathbf{z}) := \xi_\alpha(\mathbf{z})|_{q=1} = 1 - \mathbf{z}^{\alpha^\vee} \qquad \qquad \lambda_\alpha(\zz) := \frac{\xi_\alpha(\zz^{-1})}{\eta_\alpha(\zz)} \qquad \qquad
  c_\alpha(\zz) := \frac{\xi_\alpha(\zz)}{\eta_\alpha(\zz)}.
\end{equation}\label{eq:nlc}
(There is a small typo in \cite{CS80}, \S4, in this definition, later corrected in \cite{CCH}, regarding the $SL_2$ case.)

\begin{Remark}\label{rem:m_alpha}
We explain the connection between this and prior references.
The Casselman--Shalika formula heavily relies on two rank-one computations in simply connected covering groups attached to simple roots that are
of type $\SL_2$ and of type $\SU_3$ (\cite{CS80}, \S3).  
In \cite{C80} Casselman defines for each $\alpha \in \Sigma^\nd$ an element $a_\alpha \in A$ (which is sometimes also 
viewed as a coset in $A/A_\mathfrak{o}$) using the Bruhat--Tits construction and the associated root system $\Sigma_0$.
More precisely, the group of translations on the Bruhat--Tits apartment is isomorphic to $A/A_\mathfrak{o}$ such that $\varpi^\mu \in A$ corresponds to the translation $-\mu$. 
Casselman defines $a_\alpha$ for $\alpha \in \Sigma^\nd$ as the element or coset that correspond to the translation $-\lambda(\alpha)^\vee$ (\cite{C80}, \S1; see also \cite{Tamir}).
This is later simplified in \cite[§1.5]{CCH} using only the root system $\Sigma^\nd$ as 
\begin{equation}
  \label{eq:a_alpha}
  a_\alpha = \varpi^{\alpha^\vee/d_\alpha}  \qquad \alpha \in \Sigma^\nd.
\end{equation}
Since \eqref{eq:a_alpha} tells us that $a_\alpha$ corresponds to the translation $-\alpha^\vee/d_\alpha$ we thus get that $\lambda(\alpha)^\vee = \alpha^\vee / d_\alpha = (d_\alpha \alpha)^\vee$ giving us \eqref{eq:m_alpha}.
One may then follow the definitions of \cite{CS80} to obtain the ones given above; note that the $d_\alpha$-factor from \eqref{eq:a_alpha} merges the two cases for $\eta_\alpha$
in \cite{CS80}, \S4, into one.

Also, this recovers the $\SL_2$ and $\SU_3$ examples given in \cite[pp.~217--218]{CS80}.
  For $\SL_2(F)$ there is one positive root $\alpha$ with coroot $\alpha^\vee : t \mapsto\operatorname{diag}(t, t^{-1})$, and $d_\alpha = 1$.
  This gives that $a_\alpha = \varpi^{\alpha^\vee} = \alpha^\vee(\varpi) = \operatorname{diag}(\varpi, \varpi^{-1})$.
  For $\SU_3(F)$ we start with an absolute root system for $\SL_3$ with positive roots $\beta_1$, $\beta_2$ and $\beta_1 + \beta_2$.
  Reducing to the maximal split torus $A$ for $\SU_3(F)$ we get that $\beta_1|_{A} = \beta_2|_{A} =: \alpha$ and $(\beta_1 + \beta_2)|_{A} = 2\alpha$, so the positive relative roots $\Sigma_+ = \{\alpha, 2\alpha\}$ and $\Sigma^\nd_+ = \{ \alpha \}$ with $d_\alpha = 2$.
  The cocharacter lattice $X^*(\mathbf{A})$ is generated by $h : t \to \operatorname{diag}(t,1,t^{-1})$ parametrizing the whole split torus $A$.
  Since $\alpha(h(t)) = \beta_1(h(t)) = \beta_2(h(t)) = t$ we get that $\langle \alpha, h \rangle = 1$ and thus $h = \alpha^\vee/2$ and we get that $a_\alpha = \varpi^{\alpha^\vee/2} = \operatorname{diag}(\varpi, 1, \varpi^{-1})$.
  \end{Remark}

Recall the action of $W$ on $\mathbb{C}[\Lambda]$ where $w \mathbf{z}^\lambda = \mathbf{z}^{w\lambda} = (w^{-1}\mathbf{z})^\lambda$.
Since $q_{w\alpha} = q_\alpha$ for any $w \in W$ (\cite{Mac03}, eqn.\ (3.1.1)) 
and similarly for $q_{\alpha/2}$ we get that $\xi_{w\alpha}(\mathbf{z}) = w \xi_{\alpha}(\mathbf{z}) = \xi_\alpha(w^{-1} \mathbf{z})$.
Note also that from the definition of $\xi_\alpha$ we get that 
$$\xi_{\alpha}(\mathbf{z}^{-1}) = \xi_{-\alpha}(\mathbf{z}) = \xi_{s_\alpha \alpha}(\mathbf{z}) = \xi_{\alpha}(s_\alpha \mathbf{z})$$
where $s_\alpha$ is the reflection associated to $\alpha$.
The same identities also hold for $\eta_\alpha$, $\lambda_\alpha$ and $c_\alpha$.
We will denote the simple reflections $s_{\alpha_i}$ by $s_i$.

Let $\mathcal{A}^\zz_i : I(\zz) \to I(s_i\zz)$ denote the standard intertwining operators which is denoted by $T_{s_i}$ in~\cite{CS80}, but we will use this notation for generators of a Hecke algebra (the Demazure--Whittaker operators) in this paper.
We will now need some results about how $\mathcal{A}^\zz_i$ interacts with the Iwahori basis \eqref{eq:Iwahori-basis} and the Whittaker functional \eqref{eq:Whittaker-functional} from \cite{C80} and \cite{CS80}.
The versions we present here are modified to take our difference choices of conventions into account (basically amounting to appropriate insertions of $w_0$).

\begin{Prop}[{\cite[Theorem~3.4]{C80}}]
  \label{prop:intertwiner-basis}
  \begin{equation*}
    \mathcal{A}^\zz_i \Phi_w^\zz = 
    \begin{cases} 
      (c_{\alpha_i}(\mathbf{z}) - v_i) \Phi_w^{s_i \mathbf{z}} + \Phi_{s_i w}^{s_i \mathbf{z}} & \textnormal{if } s_i w > w\\
      (c_{\alpha_i}(\mathbf{z}) - 1) \Phi_w^{s_i \mathbf{z}} + v_i \Phi^{s_i \mathbf{z}}_{s_i w} & \textnormal{if } s_i w < w .
    \end{cases}
  \end{equation*}
\end{Prop}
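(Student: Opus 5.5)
We would reduce Proposition~\ref{prop:intertwiner-basis} to a rank-one computation, following the strategy of \cite[Theorem~3.4]{C80}. The operator $\mathcal{A}^\zz_i$ is given (in a cone of convergence, then by meromorphic continuation) by $\mathcal{A}^\zz_i f(g) = \int_{N_{\alpha_i}} f(s_i^{-1} n g)\,dn$, with the integral taken over the relative root subgroup $N_{\alpha_i}$; this subgroup contains $N_{2\alpha_i}$ in the $\SU_3$ case. Since $\mathcal{A}^\zz_i$ commutes with right translation, it maps $I(\zz)^J$ into $I(s_i\zz)^J$. It is therefore determined by its values $\mathcal{A}^\zz_i\Phi_w^\zz(w')$ for $w'\in W$. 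The plan is to show that these values vanish unless $w'\in\{w,s_iw\}$, and then to compute the two surviving coefficients.

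For the support statement, write $s_i^{-1}nw'$ and use the factorization $N_{\alpha_i} = N_{\alpha_i,r}\sqcup(N_{\alpha_i}\setminus N_{\alpha_i,r})$, where $r$ is chosen so that the first piece conjugates into $J$ on the appropriate side. For $n\in N_{\alpha_i,r}$ we have $s_i^{-1}nw' \in B s_i w' J$. For $n$ outside this piece, the rank-one identity in the Levi $M_{\alpha_i}$ (of type $\SL_2$ or $\SU_3$) rewrites $s_i^{-1}n$ as an element of $B\,n^-$, where $n^-\in N_{-\alpha_i}$; this puts $s_i^{-1}nw'$ in $Bw'J$. Hence only the cells $w$ and $s_iw$ contribute. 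Lemma~\ref{lem:Iwahori-factorization} tells us on which side ($N_{\pm\alpha_i,0}$ versus $N_{\pm\alpha_i,1}$) the root subgroup of $\alpha_i$ lies relative to $J_{w'}$. This is exactly where the dichotomy $s_iw>w$ versus $s_iw<w$ enters, and so it swaps which coefficient picks up the volume factor.

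The coefficients come from the measures of the pieces. The coefficient of $\Phi^{s_i\zz}_{s_iw}$ equals the volume of the integral piece, which is $1$ or $\operatorname{vol}(N_{\alpha_i,1})/\operatorname{vol}(N_{\alpha_i,0})$. We expect the latter to be $(q_{\alpha_i}q_{\alpha_i/2})^{-1}=v_i$, using Macdonald's description of $[N_{\alpha,0}:N_{\alpha,1}]$. The coefficient of $\Phi^{s_i\zz}_w$ is a geometric series $\sum_{k}\operatorname{vol}(\text{shell}_k)\,\zz^{k\alpha_i^\vee/d_{\alpha_i}}$ over the shells $N_{\alpha_i,-k}\setminus N_{\alpha_i,-k+1}$. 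In the $\SU_3$ case this series splits according to the $N_{2\alpha}$-filtration. A convenient way to identify the sum is to combine it with the known spherical identity $\mathcal{A}^\zz_i\Phi^\zz_\circ = c_{\alpha_i}(\zz)\Phi^{s_i\zz}_\circ$ from \cite{CS80}. Summing our two-term formulas over $w$ and pairing $w$ with $s_iw$ gives $(c-v_i)+v_i = c$ and $(c-1)+1=c$. This shows that once the off-diagonal coefficients ($1$ and $v_i$) are established, the diagonal ones are forced to be $c_{\alpha_i}-v_i$ and $c_{\alpha_i}-1$. One still needs the statement that the diagonal coefficient depends only on whether $s_iw\gtrless w$; this follows by conjugating by $w$ and reducing to $M_{\alpha_i}$.

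The main obstacle is the $\SU_3$ rank-one step. There we must establish the decomposition of $s_i^{-1}n$ using the two-step filtration of $N_{\alpha}N_{2\alpha}$, and check that the index $[N_{\alpha,0}:N_{\alpha,1}]$ equals $q_\alpha q_{\alpha/2}$ with the normalization $d_\alpha=2$ of \eqref{eq:muN}. We would verify this explicitly using the matrix model of $\SU_3$ in Remark~\ref{rem:m_alpha}. Finally, the conventions of this paper (lower Iwahori $J$, $w_0$-conjugation) must be translated into Casselman's; we would do this by conjugating his Theorem~3.4 by $w_0$.
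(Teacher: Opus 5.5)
Your proposal is correct, but it is a genuinely different route from the paper's. The paper gives no argument of its own. It imports \cite[Theorem~3.4]{C80} and adjusts conventions: right translation by $w_0$ carries Casselman's basis for the upper Iwahori $J_+$ to the basis $\Phi_w^\zz$ for $J=w_0J_+w_0$, and this interchanges the cases $s_iw>w$ and $s_iw<w$.

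You instead recompute the result directly with the lower Iwahori. The support and off-diagonal coefficients come from the rank-one Bruhat decomposition together with Lemma~\ref{lem:Iwahori-factorization}:
\begin{itemize}
\item For $s_iw>w$, the group $w^{-1}N_{-\alpha_i}w$ is a negative root group meeting $J$ at level $0$. The integrand for $\mathcal{A}_i\Phi_w^\zz(s_iw)$ is then $1$ exactly on $N_{\alpha_i,0}$, which gives the coefficient $1$.
\item For $s_iw<w$, that group is a positive root group meeting $J$ at level $1$. This gives $[N_{\alpha_i,0}:N_{\alpha_i,1}]^{-1}=v_i$.
\end{itemize}
The diagonal coefficients are then pinned down by the rank-one Gindikin--Karpelevich identity $\mathcal{A}_i^\zz\Phi_\circ^\zz=c_{\alpha_i}(\zz)\Phi_\circ^{s_i\zz}$.

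Your route is more transparent. It shows that the unequal parameter $v_i$ is precisely the inverse index of $J\cap N_{\alpha_i}$ in $K\cap N_{\alpha_i}$. It also avoids summing the $\SU_3$ geometric series, although that computation is still hidden inside the cited spherical identity. The paper's route is shorter: a citation plus a $w_0$-translation, with the bookkeeping of conventions and of Casselman's normalization of $q_\alpha$ left implicit.

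Three small remarks:
\begin{itemize}
\item The step you say one ``still needs'' is unnecessary. Write $\mathcal{A}_i\Phi_w^\zz=a_w\Phi_w^{s_i\zz}+b_w\Phi_{s_iw}^{s_i\zz}$ and compare the coefficient of each basis vector $\Phi_w^{s_i\zz}$ in $\mathcal{A}_i\Phi_\circ^\zz$. This already gives $a_w=c_{\alpha_i}(\zz)-b_{s_iw}$ for every $w$ individually.
\item Since you compute directly in the paper's conventions, the final conjugation of Casselman's theorem by $w_0$ is redundant.
\item Your geometric series as written omits the $\delta^{1/2}$ weights, but you never use it.
\end{itemize}
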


\begin{Prop}[{\cite[Proposition~4.3]{CS80}}]
  \label{prop:intertwiner-functional}
  For any $f \in I(\zz)$,
  \begin{align*}
    \Omega_{s_i\zz} \circ \mathcal{A}_i(f)=\lambda_{\alpha_i}(\zz) \Omega_\zz(f).
  \end{align*}
\end{Prop}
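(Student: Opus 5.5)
The plan is to combine uniqueness of the Whittaker functional with one explicit rank-one computation. The map $f\mapsto \Omega_{s_i\zz}(\mathcal{A}^\zz_i f)$ is a linear functional on $I(\zz)$. Since $\mathcal{A}^\zz_i$ intertwines the $G$-actions, this functional transforms by $\psi$ under $N^-$. For $\zz$ in general position the space of such functionals on $I(\zz)$ is one-dimensional. Hence there is a constant $C_i(\zz)$ with $\Omega_{s_i\zz}\circ\mathcal{A}_i = C_i(\zz)\,\Omega_\zz$. Both sides are rational in $\zz$: the intertwining operator is rational by Proposition~\ref{prop:intertwiner-basis}, and $\Omega_\zz$ is defined by analytic continuation of \eqref{eq:Whittaker-functional}. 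So it suffices to show $C_i(\zz)=\lambda_{\alpha_i}(\zz)$ on a Zariski-dense set, and then extend by continuation.

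To compute $C_i(\zz)$ I will evaluate both sides on the test vector $f=\Phi^\zz_1$. This vector is supported on $BJ$, which lies in the big cell for our conventions. The computation in the proof of Proposition~\ref{prop:base-case}, with $w=1$ and $\lambda=0$, gives $\Omega_\zz(\Phi_1^\zz)=\operatorname{vol}(N^-\cap K)=1$. Proposition~\ref{prop:intertwiner-basis} with $s_i\cdot 1>1$ then yields
\[
C_i(\zz)=\Omega_{s_i\zz}(\mathcal{A}_i\Phi^\zz_1)=\bigl(c_{\alpha_i}(\zz)-v_i\bigr)\,\Omega_{s_i\zz}(\Phi_1^{s_i\zz})+\Omega_{s_i\zz}(\Phi^{s_i\zz}_{s_i}).
\]
The first factor again equals $1$. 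It remains to compute $\Omega_{s_i\zz}(\Phi^{s_i\zz}_{s_i})=\int_{N^-}\Phi^{s_i\zz}_{s_i}(n)\psi^{-1}(n)\,dn$.

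For this integral I will factor $N^-=N_{-\alpha_i}\cdot N^-_{(i)}$, where $N^-_{(i)}$ is the product of the $N_{-\alpha}$ over the positive $\alpha\neq\alpha_i$ in $\Sigma^\nd_+$. The group $N^-_{(i)}$ is normalized by the representative of $s_i$. A Bruhat-cell argument like the one in Proposition~\ref{prop:base-case} should show that $n=n_1n_2$ lies in $Bs_iJ$ only if $n_2\in N^-_{(i)}\cap K$. On that piece $\psi$ is trivial and the volume is $1$. This reduces the integral to the rank-one group generated by $N_{\pm\alpha_i}$, which is a quotient of $\SL_2$ or of $\SU_3$ according to whether $d_{\alpha_i}=1$ or $2$.

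In rank one, $n_1\in Bs_iJ$ means $n_1\in N_{-\alpha_i}\setminus N_{-\alpha_i,0}$. On the shell $N_{-\alpha_i,-k}\setminus N_{-\alpha_i,-k+1}$, the Iwasawa decomposition gives an $A$-component of the form $a_{\alpha_i}^{k}$ (suitably interpreted in the $\SU_3$ case), so $\Phi$ contributes a power of $\zz^{\alpha_i^\vee/d_{\alpha_i}}$. The character $\psi$ is trivial on $N_{-\alpha_i,1-d_{\alpha_i}}$ and nontrivial on the next shell. The integral therefore truncates to at most one or two shells, giving explicit terms in $q_{\alpha_i}$ and $q_{\alpha_i/2}$. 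Summing these with $c_{\alpha_i}(\zz)-v_i$ should produce $\xi_{\alpha_i}(\zz^{-1})/\eta_{\alpha_i}(\zz)=\lambda_{\alpha_i}(\zz)$. This rank-one evaluation is the main obstacle, especially in the $\SU_3$ case. There the filtration jumps by $d_\alpha=2$ via \eqref{eq:muN}, the volumes of the shells involve both $q_\alpha$ and $q_{\alpha/2}$, and the two-factor form of $\xi_\alpha$ must emerge from the algebra.

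My first attempt will be to transcribe the explicit $\SL_2$ and $\SU_3$ matrix computations of \cite[\S3]{CS80}, correcting the conventions by conjugation with $w_0$. As a cross-check I will use $v_i=(q_{\alpha_i}q_{\alpha_i/2})^{-1}$ and the identity $c_{\alpha}(\zz)+\lambda_\alpha(\zz)$-type relations that follow directly from \eqref{eq:xi}.
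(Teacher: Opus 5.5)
Your plan is correct, and the computation you defer does go through. It is a different route from the paper's, which gives no argument and imports the identity from \cite[Proposition~4.3]{CS80}. There it rests on uniqueness plus the explicit rank-one $\SL_2$/$\SU_3$ computations of \cite[\S3]{CS80}, transported to the present conventions by inserting $w_0$. You instead test on $\Phi_1^{\zz}$, let Proposition~\ref{prop:intertwiner-basis} absorb the intertwining integral, and let Proposition~\ref{prop:base-case} supply $\Omega_{\zz}(\Phi_1^{\zz})=1$. The only new $p$-adic integral is then $\Omega_{\zz'}(\Phi_{s_i}^{\zz'})$, a finite shell sum, and you never leave the paper's conventions. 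The price is that the proof now depends on Casselman's Iwahori-basis formula (whose own proof needs rank-one intertwining computations) and on its correct transcription to these conventions. For $w=1$, the only case you use, a direct rank-one check confirms it.

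Your shell computation gives $\Omega_{\zz'}(\Phi_{s_i}^{\zz'})=\xi_{\alpha_i}(\zz')-1$ in both cases. In the $\SL_2$-type case a single shell contributes $-q_{\alpha_i}^{-1}(\zz')^{\alpha_i^\vee}$. In the $\SU_3$-type case two shells contribute $(q_{\alpha_i/2}^{-1/2}-q_{\alpha_i}^{-1}q_{\alpha_i/2}^{-1/2})\,y-q_{\alpha_i}^{-1}q_{\alpha_i/2}^{-1}\,y^2$ with $y=(\zz')^{\alpha_i^\vee/2}$. Then $c_{\alpha_i}(\zz)-v_i+\xi_{\alpha_i}(\zz^{-1})-1=\lambda_{\alpha_i}(\zz)$ is a one-line check from the definitions. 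As a by-product you recover the rank-one case $\Omega_{\zz'}(\Phi_\circ^{\zz'})=\xi_{\alpha_i}(\zz')$ of the Casselman--Shalika formula.

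When writing it up, make three points explicit.
\begin{enumerate}
\item The cell claim follows from $J=(J\cap N^+)(J\cap M)(J\cap N^-)$. Let $N^+_{(i)}$ be the product of the $N_\alpha$ with $\alpha\in\Sigma^\nd_+\setminus\{\alpha_i\}$. Moving $s_i(J\cap N^+_{(i)})s_i^{-1}\subset N^+_{(i)}$ into $B$ gives $Bs_iJ\subset Q_i\,(N^-_{(i)}\cap K)$, where $Q_i=L_iN^+_{(i)}$ is the parabolic with rank-one Levi $L_i$. The claim then follows from $Q_i\cap N^-_{(i)}=\{1\}$.
\item $\Phi_{s_i}^{\zz'}$ is not supported in the big cell (it is nonzero at $s_i$). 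So \eqref{eq:Whittaker-functional} holds only in a cone and must be continued, which is harmless because the shell sum is finite.
\item The rank-one group is in general $\operatorname{Res}_{F'/F}$ of $\SL_2$ or $\SU_3$ with $[F':F]=b_{\alpha_i}$ (e.g.\ the short root of ${}^3D_4$), so $q$ becomes $q^{b_{\alpha_i}}$. Only torus elements in the image of $\alpha_i^\vee$ occur, and there $\delta_B$ agrees with the modulus of $B\cap L_i$.
\end{enumerate}

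Note also that your argument fixes $\Omega_{\zz}$ by the integral, so $\Omega_{\zz}(\Phi_\circ^{\zz})=\Delta_q(\zz)$; this is the normalization under which the statement holds. If one instead normalized $\Omega_{\zz}(\Phi_\circ^{\zz})=1$, as the definition in Section~\ref{sec:IW} literally says, the constant would be $c_{\alpha_i}(\zz)$. To see this, sum Proposition~\ref{prop:intertwiner-basis} over $w$, which gives $\mathcal{A}_i\Phi_\circ^{\zz}=c_{\alpha_i}(\zz)\Phi_\circ^{s_i\zz}$.
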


\begin{Def}
  \label{def:Ti}
  For each simple root $\alpha_i \in \Sigma^{\nd}$ we define the \emph{Demazure--Whittaker operator} $T_i = T_{\alpha_i}$ on $\mathbb{C}(\Lambda)$ by
  \begin{equation}
    \label{eq:Ti}
    T_i := \lambda_{\alpha_i}(\mathbf{z}) s_i - (c_{\alpha_i}(\mathbf{z}) - v_i) 
  \end{equation}
  where $v_i := (q_{\alpha_i} q_{\alpha_i/2})^{-1}$.
\end{Def}
\noindent A straightforward calculation ensures that $T_i$ also restricts to a well-defined operator on $\mathbb{C}[\Lambda]$.

Continuing the examples of Figure~\ref{fig:runningex}, we provide the explicit forms of the $T_i$ operators in two cases to illustrate the new facets of the unequal-parameter Hecke algebra.

\begin{Ex}\label{D4} Let $G$ correspond to the Tits diagram $^3D_4$, which has a relative root system $\Sigma$ of type $G_2$ with two types of simple roots labelled $\alpha_1$ (short) and $\alpha_2$ (long).
See Figure~\ref{fig:runningex}. Combining \eqref{eq:Ti}, \eqref{eq:xi}, and \eqref{eq:nlc}, the explicit forms of the $T_i$ operators for $^3D_4 $ are: 
\begin{equation*}
T_1 = \frac{1-q^{-3}\zz^{-\alpha_1^\vee}}{1-\zz^{\alpha_1^\vee}}s_{1} -\frac{1-q^{-3}}{1-\zz^{\alpha_1^\vee}}
\hspace{0.5cm} \text{ and }\hspace{0.5cm}
T_2 = \frac{1-q^{-1}\zz^{-\alpha_2^\vee}}{1-\zz^{\alpha_2^\vee}}s_{2} -\frac{1-q^{-1}}{1-\zz^{\alpha_2^\vee}}
\end{equation*}
\end{Ex}

\begin{Ex}
  Similarly, consider the quasi-split group $G=\SU_{2n+1}$, which has Tits diagram $^2A_{2n}$ with relative root system $\Sigma$ of type $BC_n$, reduced root system $\Sigma^\nd$ of type $B_n$, and simple of roots $\alpha_i$ (long) for $i<n$ and $\alpha_{n}$ (short).
See Figure~\ref{fig:runningex}.
Note that $2\alpha_n \in \Sigma$ and $q_{\alpha_n/2} \neq 1$ which~\cite{CS80} denotes as a root of type $\SU_3$.
The Demazure--Whittaker operators are
\begin{align*}
T_{n} &= \frac{(1-q^{-2} \zz^{-\alpha_n^\vee/2})(1+ q^{-1}\zz^{-\alpha_n^\vee/2})}{1-\zz^{\alpha_n^\vee}}s_{n} - \frac{1-q^{3} - (q^{-2}-q^{-1})\zz^{\alpha_n^\vee/2}}{1-\zz^{\alpha_n^\vee}}\\
T_{i}&= \frac{1-q^{-2}\zz^{-\alpha_{i}^\vee}}{1-\zz^{\alpha_{i}^\vee}}s_{i} -\frac{1-q^{-2}}{1-\zz^{\alpha_{i}^\vee}}
\end{align*}
for $i < n$.
\end{Ex}

To find the inverse of $T_i$ we will use the following identities which follow straightforwardly from the definitions of $\lambda_{\alpha_i}$ and $c_{\alpha_i}$.
\begin{equation}
  \label{eq:ci}
  c_{\alpha_i} (\mathbf{z}) + c_{\alpha_i} (s_i \mathbf{z}) = 1 + v_i .
\end{equation}
\begin{equation}
  \label{eq:la2ci2}
  \lambda_{\alpha_i} (\mathbf{z}) \lambda_{\alpha_i} (s_i \mathbf{z}) + (c_{\alpha_i} (\mathbf{z}) - 1)
  (c_{\alpha_i} (\mathbf{z}) - v_i) = v_i.
\end{equation}
\begin{Lemma}
  \label{lem:inverse}
  The operator $T_i$ has the inverse
  \[ T_i^{- 1} = \frac{\lambda_{\alpha_i} (\mathbf{z}) s_i - (c_{\alpha_i} (\mathbf{z}) -
     1)}{v_i} \]
\end{Lemma}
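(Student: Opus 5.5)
We verify directly that the proposed operator $S_i := v_i^{-1}\bigl(\lambda_{\alpha_i}(\mathbf{z}) s_i - (c_{\alpha_i}(\mathbf{z}) - 1)\bigr)$ satisfies $T_i S_i = 1$ (and, by the same calculation with the roles of the two constants swapped, $S_i T_i = 1$), working in the twisted group algebra $\mathbb{C}(\Lambda) \rtimes \langle s_i\rangle$ acting on $\mathbb{C}(\Lambda)$. The only rule needed is the commutation $s_i f(\mathbf{z}) = f(s_i\mathbf{z}) s_i$ for $f \in \mathbb{C}(\Lambda)$, together with $s_i^2 = 1$. Since both operators have the form $a(\mathbf{z}) s_i + b(\mathbf{z})$, their product has the form $A(\mathbf{z}) s_i + B(\mathbf{z})$. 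So it suffices to show $A = 0$ and $B = v_i$ in $v_i T_i S_i$.

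Expanding $v_i T_i S_i = \bigl(\lambda_{\alpha_i}(\mathbf{z}) s_i - (c_{\alpha_i}(\mathbf{z}) - v_i)\bigr)\bigl(\lambda_{\alpha_i}(\mathbf{z}) s_i - (c_{\alpha_i}(\mathbf{z}) - 1)\bigr)$, moving $s_i$ to the right gives
\begin{equation*}
B(\mathbf{z}) = \lambda_{\alpha_i}(\mathbf{z})\lambda_{\alpha_i}(s_i\mathbf{z}) + (c_{\alpha_i}(\mathbf{z}) - v_i)(c_{\alpha_i}(\mathbf{z}) - 1),
\end{equation*}
which equals $v_i$ by \eqref{eq:la2ci2}. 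The coefficient of $s_i$ is
\begin{equation*}
A(\mathbf{z}) = -\lambda_{\alpha_i}(\mathbf{z})\bigl(c_{\alpha_i}(s_i\mathbf{z}) - 1\bigr) - \bigl(c_{\alpha_i}(\mathbf{z}) - v_i\bigr)\lambda_{\alpha_i}(\mathbf{z}) = -\lambda_{\alpha_i}(\mathbf{z})\bigl(c_{\alpha_i}(\mathbf{z}) + c_{\alpha_i}(s_i\mathbf{z}) - 1 - v_i\bigr),
\end{equation*}
which vanishes by \eqref{eq:ci}.

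For the other order, $v_i S_i T_i$, the constant term is $\lambda_{\alpha_i}(\mathbf{z})\lambda_{\alpha_i}(s_i\mathbf{z}) + (c_{\alpha_i}(\mathbf{z})-1)(c_{\alpha_i}(\mathbf{z})-v_i)$. This is again $v_i$ by \eqref{eq:la2ci2}, since the two linear factors commute. The $s_i$-coefficient is $-\lambda_{\alpha_i}(\mathbf{z})\bigl(c_{\alpha_i}(s_i\mathbf{z}) - v_i + c_{\alpha_i}(\mathbf{z}) - 1\bigr) = 0$, again by \eqref{eq:ci}. Alternatively, the second order follows from the first, since a one-sided inverse in the finite-dimensional $\mathbb{C}(\Lambda)^{s_i}$-algebra generated by $\mathbb{C}(\Lambda)$ and $s_i$ is two-sided.

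The only real content is in the identities \eqref{eq:ci} and \eqref{eq:la2ci2}, which the paper asserts follow from the definitions. If these need justification, the main obstacle is checking \eqref{eq:la2ci2} in the $q_{\alpha/2}\neq 1$ case. There I would set $x = \mathbf{z}^{\alpha_i^\vee/2}$, so that $s_i x = x^{-1}$ and $\eta_{\alpha_i} = 1-x^2$, and reduce both identities to polynomial identities in $x$ after clearing the denominator $(1-x^2)(1-x^{-2})$. The case $q_{\alpha/2}=1$ is a one-line check with $y = \mathbf{z}^{\alpha_i^\vee}$.
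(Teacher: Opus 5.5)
Your proposal is correct and follows the paper's proof. You expand the product of two operators of the form $a(\mathbf{z})s_i+b(\mathbf{z})$, make the $s_i$-coefficient vanish via \eqref{eq:ci}, and identify the constant term as $v_i$ via \eqref{eq:la2ci2}. The paper writes out only $v_iT_i^{-1}T_i$, so your check of the other order (or your remark that one-sided inverses are two-sided here) and your sketch for verifying the two identities are small additions, not a different route.
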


\begin{proof}
  
  \begin{eqnarray*}
    v_i T_i^{- 1} T_i & = & (\lambda_i (\mathbf{z}) s_i - (c_i (\mathbf{z}) -
    1)) (\lambda_i (\mathbf{z}) s_i - (c_i (\mathbf{z}) - v_i))\\
    & = & - \lambda_i (\mathbf{z}) (c_i (s_i \mathbf{z}) - v_i + c_i
    (\mathbf{z}) - 1) s_i + \lambda_i (\mathbf{z}) \lambda_i (s_i \mathbf{z})
    + (c_i (\mathbf{z}) - 1) (c_i (\mathbf{z}) - v_i)
  \end{eqnarray*}
  which by \eqref{eq:ci} and \eqref{eq:la2ci2} is equal to $v_i$.
\end{proof}

These Demazure--Whittaker operators describe recursive formulas for all Iwahori Whittaker functions as follows.
\begin{Thm}\label{thm:recursion}
  Let $w \in W$ and $g \in G$.
  Then,
  \begin{equation*}    
    \phi_{s_iw}(\zz;g) = 
    \begin{cases}
      T_i \, \phi_w (\mathbf{z}; g) & \textnormal{if } s_i w > w\\
      T_i^{- 1} \phi_w (\mathbf{z}; g) & \textnormal{if } s_i w < w.
    \end{cases}
  \end{equation*}
  
\end{Thm}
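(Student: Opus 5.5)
The plan is to apply the Whittaker functional $\Omega$ to both sides of Proposition~\ref{prop:intertwiner-basis} and convert the result, via Proposition~\ref{prop:intertwiner-functional}, into a linear relation among the values $\phi_w$ evaluated at $\zz$ and at $s_i\zz$. Fix $g$. The Weyl group acts on functions of $\zz$ by $(s_i f)(\zz)=f(s_i\zz)$, consistent with $w\zz^\lambda=(w^{-1}\zz)^\lambda$. Since $\phi_w(\zz;g)$ is built from $\Phi_w^{\zz^{-1}}$, I will work throughout with the parameter $\zz^{-1}$ and note that $s_i(\zz^{-1})=(s_i\zz)^{-1}$.

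\textbf{Step 1.} Apply Proposition~\ref{prop:intertwiner-basis} with parameter $\zz^{-1}$ and then apply $\Omega_{s_i\zz^{-1}}\circ\pi_{s_i\zz^{-1}}(g)$. The intertwiner commutes with $\pi(g)$, so Proposition~\ref{prop:intertwiner-functional} turns the left side into $\lambda_{\alpha_i}(\zz^{-1})\,\Omega_{\zz^{-1}}(\pi(g)\Phi_w^{\zz^{-1}})$. Multiplying by $\delta(g)^{1/2}$ gives, in the case $s_iw>w$,
\[
\lambda_{\alpha_i}(\zz^{-1})\,\phi_w(\zz;g)=(c_{\alpha_i}(\zz^{-1})-v_i)\,\phi_w(s_i\zz;g)+\phi_{s_iw}(s_i\zz;g).
\]

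\textbf{Step 2.} Replace $\zz$ by $s_i\zz$. Using $\zz^{-1}\mapsto (s_i\zz)^{-1}=s_i(\zz^{-1})$ together with the identities $\xi_\alpha(s_\alpha\zz)=\xi_\alpha(\zz^{-1})$ recorded earlier, we get $c_{\alpha_i}((s_i\zz)^{-1})=c_{\alpha_i}(\zz)$. For $\lambda$, we have $\lambda_{\alpha_i}((s_i\zz)^{-1})=\xi_{\alpha_i}(s_i\zz)/\eta_{\alpha_i}(\zz^{-1})$, and this must be compared with $\lambda_{\alpha_i}(\zz)=\xi_{\alpha_i}(\zz^{-1})/\eta_{\alpha_i}(\zz)$. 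Since $\xi_{\alpha_i}(s_i\zz)=\xi_{\alpha_i}(\zz^{-1})$ and $\eta_{\alpha_i}(\zz^{-1})=\eta_{\alpha_i}(s_i\zz)$, this coefficient equals $\xi_{\alpha_i}(\zz^{-1})/\eta_{\alpha_i}(\zz^{-1})=c_{\alpha_i}(\zz^{-1})$, not $\lambda_{\alpha_i}(\zz)$. So the direct substitution does not give the right coefficient, and I would instead solve for $\phi_{s_iw}(\zz;g)$ itself. Rewriting Step 1 with $\zz$ replaced by $s_i\zz$ gives
\[
\phi_{s_iw}(\zz;g)=\lambda_{\alpha_i}((s_i\zz)^{-1})\,\phi_w(s_i\zz;g)-\bigl(c_{\alpha_i}((s_i\zz)^{-1})-v_i\bigr)\phi_w(\zz;g).
\]
The task is then to check that $\lambda_{\alpha_i}((s_i\zz)^{-1})=\lambda_{\alpha_i}(\zz)$ and $c_{\alpha_i}((s_i\zz)^{-1})=c_{\alpha_i}(\zz)$. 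Both hold provided $(s_i\zz)^{-1}$ corresponds to $\zz$ under the identities of the form $f_\alpha(s_\alpha\zz)=f_\alpha(\zz^{-1})$, and it does: $f_\alpha((s_\alpha\zz)^{-1})=f_\alpha(s_\alpha(\zz^{-1}))=f_\alpha(\zz)$. This is valid for $\xi$, $\eta$, $\lambda$ and $c$ individually. This identification is exactly the bookkeeping I expect to be the main obstacle, because one must be careful about whether the paper's operator $s_i$ acts on $\zz$ or on $\zz^{-1}$. With the identification in hand, the displayed formula is $T_i\phi_w$ by Definition~\ref{def:Ti}.

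\textbf{Step 3.} For the case $s_iw<w$, the same manipulation using the second line of Proposition~\ref{prop:intertwiner-basis} gives
\[
v_i\,\phi_{s_iw}(\zz;g)=\lambda_{\alpha_i}(\zz)\,\phi_w(s_i\zz;g)-(c_{\alpha_i}(\zz)-1)\,\phi_w(\zz;g),
\]
which is $v_iT_i^{-1}\phi_w$ by Lemma~\ref{lem:inverse}. Alternatively, set $w'=s_iw$, so that $s_iw'>w'$; the first case gives $\phi_w=T_i\phi_{w'}$, and applying Lemma~\ref{lem:inverse} yields $\phi_{s_iw}=T_i^{-1}\phi_w$. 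I would use this second route as a consistency check on the sign and inversion conventions.

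Finally, convergence and meromorphic continuation in $\zz$ are inherited from the definition of $\Omega$. Since $g$ is fixed throughout, the identities hold as rational (indeed Laurent polynomial) functions of $\zz$.
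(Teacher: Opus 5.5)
Your proposal is correct and is essentially the paper's proof. The paper applies $\Omega_{\zz^{-1}}\circ\mathcal{A}_i^{s_i\zz^{-1}}$ directly to $\pi(g)\Phi_w^{s_i\zz^{-1}}$, which gives your post-substitution identity without the $\zz\mapsto s_i\zz$ step, and then uses $\lambda_{\alpha_i}(s_i\zz^{-1})=\lambda_{\alpha_i}(\zz)$ and $c_{\alpha_i}(s_i\zz^{-1})=c_{\alpha_i}(\zz)$ exactly as you do; note, though, that your aside in Step~2 concluding the coefficient is $c_{\alpha_i}(\zz^{-1})$ is a slip ($\eta_{\alpha_i}((s_i\zz)^{-1})=\eta_{\alpha_i}(\zz)$, not $\eta_{\alpha_i}(\zz^{-1})$), so the direct substitution already yields $\lambda_{\alpha_i}(\zz)$, your ``instead'' route is the same substitution, and that paragraph should simply be deleted.
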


\begin{proof}
To obtain the recursion relations we compute the action of $\Omega_{\mathbf{z}^{- 1}}
\circ \mathcal{A}_{i}^{s_i \mathbf{z}^{- 1}}$ on $\pi(g) \Phi_w^{s_i
\mathbf{z}^{- 1}}$ in two ways using the fact that $\mathcal{A}_i$ is a $G$-homomorphism, that is,
\begin{equation*}  
  \Omega_{\mathbf{z}^{- 1}} \circ \mathcal{A}^{s_i \mathbf{z}^{- 1}}_{i} (\pi(g) \Phi_w^{s_i \mathbf{z}^{- 1}}) = 
  \Omega_{\mathbf{z}^{- 1}} \bigl(\pi(g) \mathcal{A}_{i}^{s_i \mathbf{z}^{- 1}} (\Phi_w^{s_i \mathbf{z}^{- 1}})\bigr).
\end{equation*}
For convenience we have dropped the subscript of $\pi(g) = \pi_{s_i \zz^{-1}}(g)$.
Applying Proposition~\ref{prop:intertwiner-functional} to the left-hand side and Proposition~\ref{prop:intertwiner-basis} to the right-hand side we get
\begin{equation*}
 \lambda_{\alpha_i} (s_i \mathbf{z}^{- 1}) \Omega_{s_i \mathbf{z}^{- 1}} (\pi(g)
   \Phi_w^{s_i \mathbf{z}^{- 1}}) = \Omega_{\mathbf{z}^{- 1}} \left( \pi(g)
     \left\{\begin{array}{ll}
	 (c_{\alpha_i} (s_i \mathbf{z}^{- 1}) - v_i) \Phi_w^{\mathbf{z}^{-1}} + \Phi_{s_i
     w}^{\mathbf{z}^{- 1}} & \text{if } s_i w > w\\
     (c_{\alpha_i} (s_i \mathbf{z}^{- 1}) - 1) \Phi_w^{\mathbf{z}^{- 1}} + v_i
     \Phi^{\mathbf{z}^{- 1}}_{s_i w} & \text{if } s_i w < w
    \end{array} \right\} \right) 
\end{equation*}
that is,
\[ \lambda_{\alpha_i} (s_i \mathbf{z}^{- 1}) \phi_w (s_i \mathbf{z}; g) =
   \left\{\begin{array}{ll}
     (c_{\alpha_i} (s_i \mathbf{z}^{- 1}) - v_i) \phi_w (\mathbf{z}; g) + \phi_{s_i w}(\mathbf{z}; g) & \text{if } s_i w > w\\
     (c_{\alpha_i} (s_i \mathbf{z}^{- 1}) - 1) \phi_w (\mathbf{z}; g) + v_i \phi_{s_iw} (\mathbf{z}; g) & \text{if } s_i w < w
   \end{array}\right. \]
   Noting that $\lambda_{\alpha_i} (s_i \mathbf{z}^{- 1}) = \lambda_{\alpha_i} (\mathbf{z})$ and $c_{\alpha_i} (s_i \mathbf{z}^{- 1}) = c_{\alpha_i} (\mathbf{z})$, and solving for $\phi_{s_i w}(\mathbf{z}; g)$ we get
\[ \phi_{s_i w} (\mathbf{z}; g) = \left\{\begin{array}{ll}
     \lambda_i (\mathbf{z}) \phi_w (s_i \mathbf{z}; g) - (c_{\alpha_i} (\mathbf{z}) - v_i) \phi_w (\mathbf{z}; g) & \text{if } s_i w > w\\
     v_i^{- 1} \lambda_i (\mathbf{z}) \phi_w (s_i \mathbf{z}; g) - v_i^{- 1}(c_{\alpha_i} (\mathbf{z}) - 1) \phi_w (\mathbf{z}; g) & \text{if } s_i w < w
   \end{array}\right. \]
\end{proof}

\begin{Lemma}[Lemma 3 of \cite{BBL}]\label{lem:BBL}
  Suppose that $\mathcal{O}$ is an operator on $\mathbb{C}[\Lambda]$ on the form $\sum_{w \in W} f_w \cdot w$ where $f_w \in \mathbb{C}[\Lambda]$, and that $\mathcal{O} \zz^\lambda = 0$ for every dominant weight $\lambda \in \Lambda$. Then $\mathcal{O}=0$.
\end{Lemma}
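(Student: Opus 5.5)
The plan is to prove Lemma~\ref{lem:BBL} via linear independence of characters, after reducing to a single term. First, note that the operator $\mathcal{O}=\sum_{w\in W} f_w\, w$ is $\mathbb{C}[\Lambda]$-linear in the coefficients. It therefore suffices to show that every $f_w$ vanishes, since $\mathcal{O}$ is then the zero operator on all of $\mathbb{C}[\Lambda]$ (and even on $\mathbb{C}(\Lambda)$). For a dominant $\lambda$ the hypothesis reads
\[
\sum_{w\in W} f_w(\zz)\,\zz^{w\lambda}=0 .
\]

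Next, fix a strictly dominant weight $\rho'$ in $\Lambda$, i.e.\ one with $\langle\alpha_i,\rho'\rangle>0$ for all simple roots. Such a weight exists after possibly enlarging by a multiple, since the dominant chamber of $\Lambda\otimes\mathbb{R}$ has nonempty interior and $\Lambda$ is a lattice in it. If $\Sigma$ does not span, add any dominant weight; the stabilizer is still trivial because $W$ acts through the roots. For this $\rho'$ the orbit $\{w\rho'\}$ consists of $|W|$ distinct weights.

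Now apply the hypothesis to the family $\lambda = \mu + k\rho'$ for fixed dominant $\mu$ and all $k\ge 0$. These are all dominant. Writing $t_w := \zz^{w\rho'}$, we get
\[
\sum_{w} \bigl(f_w\,\zz^{w\mu}\bigr)\, t_w^{\,k}=0 \qquad \text{for all } k\ge 0 .
\]
Viewed in the field $\mathbb{C}(\Lambda)$, this is a Vandermonde system in the distinct elements $t_w$. The $t_w$ are distinct monomials since the $w\rho'$ are distinct. The Vandermonde matrix $(t_w^{k})_{0\le k<|W|,\,w\in W}$ is therefore invertible over $\mathbb{C}(\Lambda)$. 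Hence $f_w\,\zz^{w\mu}=0$ for every $w$, and since $\zz^{w\mu}$ is a unit, $f_w=0$.

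The only real obstacle is the existence of a dominant $\rho'\in\Lambda$ with trivial $W$-stabilizer, i.e.\ $W$-regular. This follows because the integral points of the open dominant chamber are nonempty: take $\rho'$ to be a positive multiple of the sum of fundamental coweights projected appropriately into $\Lambda$, clearing denominators. No properties of the coefficients beyond lying in the domain $\mathbb{C}[\Lambda]$ are needed.
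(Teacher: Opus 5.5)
Your proof is correct. The paper gives no argument of its own for this lemma; it simply imports it from \cite{BBL}. What you have written is therefore a self-contained replacement rather than a variant of an internal proof.

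In substance your argument is the Dedekind--Artin linear independence of characters, made explicit on the cyclic submonoid $\{k\rho'\}_{k\ge 0}$ of dominant weights. Regularity of $\rho'$ makes the characters $k\mapsto \zz^{k\,w\rho'}$ distinct. The Vandermonde determinant, a product of the nonzero differences $t_{w'}-t_w$ in the domain $\mathbb{C}[\Lambda]$, then does the work, with no need to specialize $\zz$ to generic points of $\widehat{\mathbf{A}}(\mathbb{C})$.

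Your route buys two things. It uses only the $|W|$ dominant weights $0,\rho',\dots,(|W|-1)\rho'$. It also works verbatim for $f_w\in\mathbb{C}(\Lambda)$, so the clearing of denominators in the proof of Proposition~\ref{prop:braiding} is not actually needed.

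Your existence paragraph is loose (``projected appropriately'', the remark about $\Sigma$ not spanning), but it is easily made explicit. Take $\rho'=\sum_{\alpha\in\Sigma^\nd_+}\alpha^\vee$. It lies in $\Lambda$ and satisfies $\langle\alpha_i,\rho'\rangle=2$ for every simple root, whether or not $\Sigma$ spans. Its stabilizer is trivial: for $w\neq 1$ some $\alpha>0$ has $w^{-1}\alpha<0$, whence $\langle\alpha,w\rho'\rangle=\langle w^{-1}\alpha,\rho'\rangle<0<\langle\alpha,\rho'\rangle$.
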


We will now show that the $T_i$ operators braid analogously to how the split case was proven in \cite[Section~3]{BBL}.
\begin{Prop}
  \label{prop:braiding}
Let $s_i$ and $s_j$ be simple reflections in $W$. Then the corresponding Demazure--Whittaker operators $T_{i}$ and $T_{j}$ satisfy the same braiding relations as $s_i$ and $s_j$. That is to say,
if $s_is_js_i\cdots = s_js_is_j\cdots$, where there are $k$ elements on each side, then
\[ T_iT_jT_i \cdots = T_jT_iT_j\cdots\]
with $k$ elements on each side as well.
\end{Prop}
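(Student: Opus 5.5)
Following \cite[Section~3]{BBL}, we prove the braid relations by exploiting Theorem~\ref{thm:recursion} together with Lemma~\ref{lem:BBL}, rather than by a direct rank-two computation.

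Fix simple reflections $s_i \neq s_j$ and write $u_1 = s_is_js_i\cdots$ and $u_2 = s_js_is_j\cdots$ for the two words of length $k$, which are reduced. Put $w = u_1 = u_2$; this is the longest element of the rank-two parabolic subgroup $W_{ij}$. Since every prefix of a reduced word is reduced, at each step of building $u_1$ from the right we multiply on the left by a simple reflection that increases length. Applying Theorem~\ref{thm:recursion} $k$ times, always in the case $s w' > w'$, gives, for every $g\in G$,
\[
\phi_{w}(\zz;g) = T_iT_jT_i\cdots \phi_1(\zz;g) = T_jT_iT_j\cdots\phi_1(\zz;g).
\]
The operators act on the variable $\zz$, with $g$ held fixed.

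Set $\mathcal{O} := T_iT_jT_i\cdots - T_jT_iT_j\cdots$, with $k$ factors in each product. The previous display says $\mathcal{O}$ annihilates the function $\zz\mapsto\phi_1(\zz;g)$ for every $g$. Taking $g = \varpi^{-\lambda}$ and applying Proposition~\ref{prop:base-case}, we get $\phi_1(\zz;\varpi^{-\lambda}) = \zz^\lambda$ for every dominant $\lambda$. Hence $\mathcal{O}\zz^\lambda = 0$ for all dominant $\lambda$.

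It remains to check that $\mathcal{O}$ has the form required by Lemma~\ref{lem:BBL}. Each $T_i$ is a combination $a(\zz)s_i + b(\zz)$ with rational coefficients, so $\mathcal{O} = \sum_{u\in W_{ij}} f_u\cdot u$ with $f_u\in\mathbb{C}(\Lambda)$. Two points need care here.
\begin{itemize}
\item \textbf{Polynomial coefficients.} Lemma~\ref{lem:BBL} asks for $f_u\in\mathbb{C}[\Lambda]$. We clear denominators by multiplying $\mathcal{O}$ on the left by $D = \prod_{\alpha}\eta_\alpha(\zz)$, the product over positive roots of the rank-two subsystem. This is harmless: $D\mathcal{O}$ still kills all dominant monomials, and $D\mathcal{O} = 0$ implies $\mathcal{O}=0$ because $D$ is a nonzero element of the field $\mathbb{C}(\Lambda)$.
\item \textbf{Meaning of the identity.} The identity $\mathcal{O}\phi_1 = 0$ holds as an identity of rational (in fact Laurent polynomial) functions in $\zz$, by analytic continuation in $\zz$.
\end{itemize}
Lemma~\ref{lem:BBL} then gives $\mathcal{O} = 0$, which is the braid relation.

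The main obstacle is the step that turns the recursion into an operator identity. One must confirm that Theorem~\ref{thm:recursion} yields the same function $\phi_w$ whichever reduced word is used; this holds because $\phi_w$ is defined intrinsically from $\Phi_w$, independent of any word. One must also confirm that the operators act on $\phi_1(\cdot\,;\varpi^{-\lambda})$ as functions of $\zz$ alone, with the group element fixed.

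A fallback exists if Lemma~\ref{lem:BBL} needs genuinely polynomial-coefficient operators $\sum f_w w$ (noting that $T_i$ preserves $\mathbb{C}[\Lambda]$). In that case we would instead run the argument of \cite{BBL} directly: a finite sum $\sum_u f_u\cdot u$ killing all dominant monomials forces each $f_u = 0$, by choosing $\lambda$ deep in the dominant chamber so that the exponents $u\lambda$ are distinct and separated from the supports of the $f_u$. This works verbatim for rational $f_u$ after clearing denominators.
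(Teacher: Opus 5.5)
Your proposal is correct and follows the paper's proof step for step. You apply Theorem~\ref{thm:recursion} along both reduced words starting from $\phi_1$, specialize to $g=\varpi^{-\lambda}$ with $\lambda$ dominant via Proposition~\ref{prop:base-case}, clear denominators, and conclude with Lemma~\ref{lem:BBL}. The only looseness is that you take $D=\prod_\alpha\eta_\alpha(\zz)$ as the common denominator without justification, even though expanding $T_iT_jT_i\cdots$ term by term produces repeated factors such as $\eta_{\alpha_i}(\zz)^2$. This is harmless, since any common denominator works, and that is all the paper's argument uses.
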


\begin{proof}
  Suppose $s_i, s_j$ are simple reflections in $W$ such that $(s_is_j)^k=1$ for a minimal $k$, so $w := s_is_js_i\cdots = s_js_is_j\cdots$ with $k$ elements on each side.
  By Theorem~\ref{thm:recursion},
  \[ T_iT_jT_i \cdots \phi_1(\zz; g) = \phi_w(\zz;g) = T_jT_iT_j\cdots \phi_1(\zz;g).\]
  From Proposition~\ref{prop:base-case} we have that $\phi_1(\zz; \varpi^{-\lambda}) = \zz^\lambda$ for dominant $\lambda \in \Lambda$ and thus 
  \begin{equation}
    \label{eq:braiding}
    (T_iT_jT_i\cdots - T_jT_iT_j\cdots )\zz^\lambda = 0
  \end{equation} 
  for all dominant $\lambda \in \Lambda$.
  We may collect the common denominator in \eqref{eq:braiding} as a function $f(\mathbf{z})$ and write \eqref{eq:braiding} as $f(z)^{-1} \mathcal{O} \mathbf{z}^\lambda = 0$ for an operator $\mathcal{O}$ on $\mathbb{C}[\Lambda]$ with coefficients in the same ring.
  The statement now follows from Lemma~\ref{lem:BBL}.
\end{proof}

Since the operators $T_i$ braid we can now define the operator $T_w$ for any $w \in W$ by $T_w = T_{i_1} \cdots T_{i_k}$ where $w = s_{i_1} \cdots s_{i_k}$ is a reduced expression.
The braiding property ensures that this does not depend on the choice of reduced expression.

We will now prove that the $T_i$ operators generate an unequal-parameter Hecke algebra, a concept originally defined by Lusztig in \cite{Lus89} and further investigated in \cite{Lus03}.

\begin{Thm}
  \label{thm:hecke-algebra}
  The operators $T_i$ generate a finite, unequal-parameter Iwahori--Hecke algebra
  with parameters $v_i = (q_{\alpha_i}q_{\alpha_i/2})^{-1}$.
  That is, the $T_i$ satisfy the same braid relations as the simple
  reflections in $W$ and they satisfy the quadratic relations $T_i^2 + (1 -
  v_i) T_i = v_i$.
\end{Thm}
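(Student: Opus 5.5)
The braid relations are exactly Proposition~\ref{prop:braiding}, so only the quadratic relation $T_i^2 + (1-v_i)T_i = v_i$ needs proof. My plan is to derive it from Lemma~\ref{lem:inverse}. No further use of Whittaker functions or of Lemma~\ref{lem:BBL} should be needed, since the identity is purely algebraic in the twisted group ring $\mathbb{C}(\Lambda)\rtimes W$.

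The key step is to compare the two expressions
\[ T_i = \lambda_{\alpha_i}(\zz)s_i - c_{\alpha_i}(\zz) + v_i, \qquad v_iT_i^{-1} = \lambda_{\alpha_i}(\zz)s_i - c_{\alpha_i}(\zz) + 1 .\]
Subtracting them gives the identity $v_i T_i^{-1} = T_i + 1 - v_i$ of operators on $\mathbb{C}(\Lambda)$. Multiplying on the left (or right) by $T_i$ then yields $v_i = T_i^2 + (1-v_i)T_i$, which is the claimed quadratic relation.

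For completeness I would also remark that Lemma~\ref{lem:inverse} was checked only as a left inverse, $T_i^{-1}T_i = 1$. To multiply by $T_i$ on either side, one must also know that $T_i$ is a right inverse of $T_i^{-1}$. There are two ways to see this:
\begin{itemize}
\item Expand $T_i(\lambda_{\alpha_i}s_i - c_{\alpha_i}+1)$ directly. The $s_i$-coefficient is $\lambda_{\alpha_i}(\zz)\bigl(c_{\alpha_i}(s_i\zz)+c_{\alpha_i}(\zz)-1-v_i\bigr)$, using that $\lambda_{\alpha_i}(\zz)s_i\,c_{\alpha_i}(\zz) = \lambda_{\alpha_i}(\zz)c_{\alpha_i}(s_i\zz)s_i$. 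This vanishes by \eqref{eq:ci}, and the scalar term equals $v_i$ by \eqref{eq:la2ci2}.
\item Alternatively, note that a one-sided inverse in the twisted group algebra over the field $\mathbb{C}(\Lambda)$ is two-sided, because that algebra is finite-dimensional over $\mathbb{C}(\Lambda)^W$.
\end{itemize}
Finally, $T_i$ preserves $\mathbb{C}[\Lambda]$, as noted after Definition~\ref{def:Ti}. The relation therefore holds on $\mathbb{C}[\Lambda]$ as well, so the $T_i$ generate a quotient of the unequal-parameter Hecke algebra with parameters $v_i$.

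The only potential obstacle is bookkeeping of the commutation $s_i f(\zz) = f(s_i\zz)s_i$ in the expansion, which \eqref{eq:ci} and \eqref{eq:la2ci2} absorb. I do not expect a genuine difficulty. If the statement is also meant to include that the algebra has dimension $|W|$, I would add one step: the $T_w$ are linearly independent, because $T_w$ has leading term $\bigl(\prod\lambda\bigr)\,w$ in the $W$-grading, i.e.\ the $w$-component of $T_w$ is nonzero and the other components involve only shorter elements.
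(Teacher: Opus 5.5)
Your proposal is correct and follows the paper's proof: the braid relations come from Proposition~\ref{prop:braiding}, and the quadratic relation comes from reading off $v_iT_i^{-1} = T_i + 1 - v_i$ from Definition~\ref{def:Ti} and Lemma~\ref{lem:inverse}. Your left/right-inverse discussion is harmless but unnecessary, for two reasons: multiplying this identity on the right by $T_i$ uses only the left-inverse identity $T_i^{-1}T_i = 1$ that the lemma verifies, and $v_iT_i^{-1}$ differs from $T_i$ by a constant, so the two commute.
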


\begin{proof} Proposition~\ref{prop:braiding} proves the braiding relations of the $T_i$.
  Next, we rewrite the quadratic relation as $T_i(T_i + 1 - v_i)/v_i = 1$.
From Definition~\ref{def:Ti} and Lemma~\ref{lem:inverse} it directly follows that $T_i^{-1} = (T_i + 1 - v_i)/v_i$ which completes the proof.
\end{proof}

\section{Classical Demazure operators and the Casselman--Shalika formula}
\label{sec:Di}
In this section we examine a shifted operator, the classical Demazure operators $D_i$, which may be written as a normalized form of $T_i+1$ where $T_i$ is the Demazure--Whittaker operators from Section~\ref{sec:Ti}.
The classical Demazure operators $D_i$ are known to braid and are idempotent and do not depend on the parameters $v_i$.
As an important result of this paper we show in Theorem~\ref{thm:TwDw0} that $\sum_{w \in W} T_w$ equals $D_{w_0}$ times a deformed Weyl denominator.
Using this, we will also recover an independent proof of the Casselman--Shalika formula in Theorem~\ref{thm:CS}.
First we need the following lemmas.

\begin{Lemma}
  \label{lem:Tip1}
  We have that $s_i (T_i + 1) = \frac{\xi_{\alpha_i} (\mathbf{z})}{\xi_{\alpha_i}(s_i\mathbf{z})} (T_i + 1)$.
\end{Lemma}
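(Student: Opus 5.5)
The plan is a direct computation with the explicit form of $T_i$ from Definition~\ref{def:Ti}, viewing both sides as operators on $\mathbb{C}(\Lambda)$ of the form $f_1(\mathbf{z}) + f_2(\mathbf{z})\, s_i$. It then suffices to match the coefficient of $1$ and the coefficient of $s_i$ on the two sides.

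First I rewrite $T_i + 1$. By \eqref{eq:Ti},
\[ T_i + 1 = \lambda_{\alpha_i}(\mathbf{z})\, s_i + \bigl(1 + v_i - c_{\alpha_i}(\mathbf{z})\bigr). \]
By \eqref{eq:ci} this becomes
\[ T_i + 1 = \lambda_{\alpha_i}(\mathbf{z})\, s_i + c_{\alpha_i}(s_i\mathbf{z}). \]
Composing on the left with $s_i$, using $s_i \circ f(\mathbf{z}) = f(s_i\mathbf{z})\, s_i$ and $s_i^2 = 1$, gives
\[ s_i(T_i+1) = \lambda_{\alpha_i}(s_i\mathbf{z}) + c_{\alpha_i}(\mathbf{z})\, s_i. \]

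The right-hand side of the lemma is $\frac{\xi_{\alpha_i}(\mathbf{z})}{\xi_{\alpha_i}(s_i\mathbf{z})}\bigl(\lambda_{\alpha_i}(\mathbf{z})\, s_i + c_{\alpha_i}(s_i\mathbf{z})\bigr)$. Comparing coefficients, I need two identities:
\[ \frac{\xi_{\alpha_i}(\mathbf{z})}{\xi_{\alpha_i}(s_i\mathbf{z})}\,\lambda_{\alpha_i}(\mathbf{z}) = c_{\alpha_i}(\mathbf{z}), \qquad \frac{\xi_{\alpha_i}(\mathbf{z})}{\xi_{\alpha_i}(s_i\mathbf{z})}\, c_{\alpha_i}(s_i\mathbf{z}) = \lambda_{\alpha_i}(s_i\mathbf{z}). \]

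For these I use the identity $\xi_{\alpha}(\mathbf{z}^{-1}) = \xi_\alpha(s_\alpha\mathbf{z})$ noted before Proposition~\ref{prop:intertwiner-basis}. With it, \eqref{eq:nlc} gives
\[ \lambda_{\alpha_i}(\mathbf{z}) = \frac{\xi_{\alpha_i}(s_i\mathbf{z})}{\eta_{\alpha_i}(\mathbf{z})}, \qquad \lambda_{\alpha_i}(s_i\mathbf{z}) = \frac{\xi_{\alpha_i}(\mathbf{z})}{\eta_{\alpha_i}(s_i\mathbf{z})}. \]
Both identities then reduce to cancellation of $\xi_{\alpha_i}(s_i\mathbf{z})$ against the definitions $c_{\alpha_i} = \xi_{\alpha_i}/\eta_{\alpha_i}$ evaluated at $\mathbf{z}$ and at $s_i\mathbf{z}$.

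The only point needing care is the bookkeeping of operator composition versus pointwise evaluation, namely that $s_i$ conjugates a multiplication operator $f(\mathbf{z})$ to $f(s_i\mathbf{z})$, and applying \eqref{eq:ci} to eliminate $v_i$. I do not expect a real obstacle. The argument is uniform in whether $q_{\alpha_i/2}=1$, since only the symmetry $\xi_\alpha(\mathbf{z}^{-1})=\xi_\alpha(s_\alpha\mathbf{z})$ is used.
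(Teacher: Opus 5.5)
Your proposal is correct and follows essentially the same route as the paper: rewrite $T_i+1 = \lambda_{\alpha_i}(\mathbf{z})\,s_i + c_{\alpha_i}(s_i\mathbf{z})$ using \eqref{eq:ci}, express everything via $\xi_{\alpha_i}(\mathbf{z}^{-1})=\xi_{\alpha_i}(s_i\mathbf{z})$, conjugate by $s_i$, and check that the two coefficients match. The paper does the same cancellation by writing out the $\xi/\eta$ quotients directly instead of stating the two coefficient identities separately.
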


\begin{proof}
  First, we note that from the definition \eqref{eq:Ti} of $T_i$ together with \eqref{eq:ci} and the identity $\xi_{\alpha_i}(s_i\zz) = \xi_{\alpha_i}(\zz^{-1})$, we get that 
\begin{equation}
  \label{eq:Tip1}
  T_i + 1 = \frac{\xi_{\alpha_i} (\mathbf{z}^{-1})}{\eta_{\alpha_i} (\mathbf{z})} s_i + c_{\alpha_i}(s_i \zz) = \frac{\xi_{\alpha_i} (s_i\mathbf{z})}{\eta_{\alpha_i} (\mathbf{z})} s_i + \frac{\xi_{\alpha_i} (s_i \mathbf{z})}{\eta_{\alpha_i}(s_i \mathbf{z})}. 
\end{equation}
  Thus, when applying $s_i$, we get
\begin{equation*}
  s_i (T_i + 1) = 
  \frac{\xi_{\alpha_i} (\mathbf{z})}{\eta_{\alpha_i}(s_i\mathbf{z})} + \frac{\xi_{\alpha_i} ( \mathbf{z})}{\eta_{\alpha_i}(\mathbf{z})}s_i = 
  \frac{\xi_{\alpha_i} (\mathbf{z})}{\xi_{\alpha_i}(s_i\mathbf{z})} (T_i + 1).
\end{equation*}
\end{proof}

\begin{Cor}
  \label{cor:Tcirc}Let $T_{\circ} := \sum_{w \in W} T_w$. Then
  \[ s_i T_{\circ} = \frac{\xi_{\alpha_i} (\mathbf{z})}{\xi_{\alpha_i} (\mathbf{z}^{- 1})}
     T_{\circ} \]
\end{Cor}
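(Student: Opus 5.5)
The plan is to factor $T_\circ$ on the left through $T_i + 1$ and then apply Lemma~\ref{lem:Tip1}. First I would split $W$ into cosets for the subgroup $\{1, s_i\}$. Let $W^i = \{ w \in W : s_i w > w \}$ be the set of minimal length representatives of $\{1,s_i\}\backslash W$. Then every element of $W$ is uniquely $w$ or $s_i w$ with $w \in W^i$, and $\ell(s_i w) = \ell(w) + 1$. Concatenating $s_i$ with a reduced expression of $w$ gives a reduced expression for $s_i w$. The braiding property (Proposition~\ref{prop:braiding}) makes $T_{s_i w}$ independent of the reduced expression, so $T_{s_i w} = T_i T_w$. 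Hence
\[ T_\circ = \sum_{w \in W^i} (T_w + T_i T_w) = (T_i + 1) \sum_{w \in W^i} T_w . \]

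Next I would compose with $s_i$ on the left, as an operator on $\mathbb{C}(\Lambda)$, and use Lemma~\ref{lem:Tip1}:
\[ s_i T_\circ = s_i (T_i+1) \sum_{w \in W^i} T_w = \frac{\xi_{\alpha_i}(\zz)}{\xi_{\alpha_i}(s_i \zz)} (T_i + 1)\sum_{w\in W^i} T_w = \frac{\xi_{\alpha_i}(\zz)}{\xi_{\alpha_i}(s_i \zz)} T_\circ. \]
Here the rational function multiplies on the left, so it commutes past nothing and the identity is immediate. Finally, the identity $\xi_{\alpha_i}(s_i\zz) = \xi_{\alpha_i}(\zz^{-1})$ noted earlier converts the denominator to the stated form.

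The only step requiring care is the factorization $T_{s_i w} = T_i T_w$ for $w \in W^i$. This is standard Coxeter group theory (uniqueness of coset decomposition and additivity of length for left minimal coset representatives), combined with well-definedness of $T_w$. I do not expect a genuine obstacle. One should only note that the equality in Lemma~\ref{lem:Tip1} is an identity of operators, which may be right-multiplied by $\sum_{w\in W^i} T_w$.
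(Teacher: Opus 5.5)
Your proposal is correct and follows the paper's own proof essentially step for step. The paper also pairs each $w$ with $s_i w > w$ to factor $T_\circ = (1+T_i)\sum_{s_iw>w}T_w$, then applies Lemma~\ref{lem:Tip1}, and finally uses $\xi_{\alpha_i}(s_i\zz)=\xi_{\alpha_i}(\zz^{-1})$ to get the stated form.
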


\begin{proof}
  We have that
  \begin{align*} T_{\circ}  =\sum_{\substack{w \in W\\
       s_i w > w}} T_w + \sum_{\substack{w \in W\\
       s_i w < w}} T_w = \sum_{\substack{w \in W\\
       s_i w > w}} (T_w + T_{s_i w}) = \sum_{\substack{ w \in W\\
       s_i w > w}} (1 + T_{s_i}) T_w 
       \end{align*}
  since $T_{s_i w} = T_{s_i} T_w$ for $s_i w > w$. Thus, by
  Lemma~\ref{lem:Tip1},
  \[ s_i T_{\circ} = s_i \sum_{\substack{w \in W\\
       s_i w > w}} (1 + T_i) T_w = \frac{\xi_i (\mathbf{z})}{\xi_i
     (s_i\mathbf{z})} \sum_{\substack{w \in W\\
       s_i w > w}} (T_i + 1) T_w = \frac{\xi_i (\mathbf{z})}{\xi_i
     (s_i\mathbf{z})} T_{\circ} . \]
\end{proof}

\begin{Def}
  \label{def:Di}
  Let $D_i := \frac{1}{\xi_{\alpha_i} (s_i \mathbf{z})} (T_i + 1)$ which we call the classical Demazure operators. 
\end{Def}
Indeed, using \eqref{eq:Tip1} and the definition of $\eta_{\alpha}$ we get the following expression for $D_i$.
\begin{equation*}
  D_i = \frac{1}{\eta_{\alpha_i}(\mathbf{z})} s_i + \frac{1}{\eta_{\alpha_i}(s_i \mathbf{z})} = \frac{1-\mathbf{z}^{- \alpha_i^\vee } s_i}{1 -\mathbf{z}^{- \alpha_i^\vee }}.
\end{equation*}
These are exactly the Demazure operators $\partial_i$ defined in \cite[§25]{Bump}.
They are idempotent, that is, $D_i^2 = D_i$, and satisfy the same braid relations as the simple reflections in $W$ by \cite[Proposition~25.2 and Proposition~25.3 respectively]{Bump}.
Note that there is a typo in the Proposition~25.3 where $\partial_i$ has been written as $D_i$ (see \cite[§2]{BBBG:Demazure}).
Let $w = s_{i_1} \cdots s_{i_k} \in W$ be a reduced word.
Because of the braiding of $D_i$ we can define $D_w := D_{i_1} \cdots D_{i_k}$.

\begin{Lemma}
  \label{lem:Dinv}For all simple reflections $s_i$,
  \[ s_i D_{w_0} = D_{w_0} . \]
\end{Lemma}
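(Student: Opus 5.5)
The plan is to reduce the claim to the rank-one identity $s_i D_i = D_i$. Then I will exploit the fact that $D_{w_0}$ can be written with a reduced word beginning with any prescribed simple reflection.

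\emph{Step 1: the rank-one identity.} Use the explicit form
\[
D_i = \frac{1-\mathbf{z}^{-\alpha_i^\vee} s_i}{1-\mathbf{z}^{-\alpha_i^\vee}}.
\]
Composing $s_i$ on the left with this operator gives
\[
s_i D_i = \frac{1-\mathbf{z}^{\alpha_i^\vee}\, s_i}{1-\mathbf{z}^{\alpha_i^\vee}} \, s_i
= \frac{s_i-\mathbf{z}^{\alpha_i^\vee}}{1-\mathbf{z}^{\alpha_i^\vee}}.
\]
Here we use $s_i \mathbf{z}^{\mu} s_i^{-1} = \mathbf{z}^{s_i\mu}$ and $s_i^2=1$. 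Multiplying the numerator and denominator by $-\mathbf{z}^{-\alpha_i^\vee}$ turns this into
\[
\frac{1-\mathbf{z}^{-\alpha_i^\vee}s_i}{1-\mathbf{z}^{-\alpha_i^\vee}} = D_i.
\]
Alternatively, one can argue via Lemma~\ref{lem:Tip1}. By Definition~\ref{def:Di},
\[
s_i D_i = s_i\,\frac{1}{\xi_{\alpha_i}(s_i\mathbf{z})}\,(T_i+1) = \frac{1}{\xi_{\alpha_i}(\mathbf{z})}\, s_i (T_i+1),
\]
and Lemma~\ref{lem:Tip1} then gives exactly $D_i$. Either route gives $s_iD_i=D_i$ as operators on $\mathbb{C}(\Lambda)$.

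\emph{Step 2: choosing a good reduced word.} For any simple reflection $s_i$ we have $\ell(s_i w_0) < \ell(w_0)$, since every simple root is sent negative by $w_0$. Hence $w_0$ has a reduced expression $w_0 = s_i s_{j_2}\cdots s_{j_N}$ starting with $s_i$. Because the $D_j$ satisfy the braid relations \cite[Proposition~25.3]{Bump}, $D_{w_0}$ is independent of the reduced word. So $D_{w_0} = D_i D_{s_i w_0}$, where $D_{s_iw_0} = D_{j_2}\cdots D_{j_N}$.

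\emph{Step 3: conclusion.} Combining the two steps,
\[
s_i D_{w_0} = (s_i D_i)\, D_{s_i w_0} = D_i D_{s_iw_0} = D_{w_0}.
\]
The only point needing care is the well-definedness of $D_w$ across reduced words, which is already guaranteed by the cited braid relations. Beyond that, the main (mild) obstacle is getting the twisted multiplication in Step 1 right, namely that left multiplication by $s_i$ applies $s_i$ to the coefficient functions. Everything else is formal.
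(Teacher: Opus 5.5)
Your proof is correct and follows essentially the same route as the paper. Both arguments factor $D_{w_0}=D_iD_{s_iw_0}$ using a reduced word for $w_0$ beginning with $s_i$ together with the braid relations; the only difference is the rank-one input. The paper invokes the cited idempotency $D_i^2=D_i$ to get $D_iD_{w_0}=D_{w_0}$, then unpacks $D_i=(1-\mathbf{z}^{-\alpha_i^\vee})^{-1}(1-\mathbf{z}^{-\alpha_i^\vee}s_i)$ to extract $s_iD_{w_0}=D_{w_0}$, whereas you verify $s_iD_i=D_i$ directly, which is slightly more self-contained.
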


\begin{proof}
  Let  $w = s_i w_0$. Then $s_i w > w $ and $D_{w_0} = D_i D_w$. Using idempotency,
  \[D_i D_{w_0} = D_i^2 D_w = D_i D_w = D_{w_0}.\]
  Inserting the definition of $D_i$, we see that
 
\[ (1 -\mathbf{z}^{- d_i \alpha_i} s_i) D_{w_0} = (1
  -\mathbf{z}^{- d_i \alpha_i}) D_{w_0}\]
  and thus $ s_i D_{w_0} =D_{w_0}.$
\end{proof}

\begin{Thm}\label{thm:TwDw0}
  Let $\Delta_q(\zz) = \prod_{\alpha \in \Sigma^\nd_+} \xi_{\alpha}(\mathbf{z})$.
  Then $\sum_{w \in W} T_w = \Delta_q(\zz^{-1}) D_{w_0}$.
\end{Thm}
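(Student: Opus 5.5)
The plan is to exploit that both sides are $W$-anti-invariant in the same twisted sense and that such operators are determined by a single coefficient. Write any operator of the form $\mathcal{O}=\sum_{w\in W} f_w(\zz)\, w$ with $f_w\in\mathbb{C}(\Lambda)$. Suppose $s_i\mathcal{O}=\mathcal{O}$ for all $i$. Expanding $s_i\mathcal{O}=\sum_w f_w(s_i\zz)\, s_iw$, we get $f_{s_iw}(\zz)=f_w(s_i\zz)$. Hence $f_w(\zz)=f_1(w^{-1}\zz)$, so $\mathcal{O}=\bigl(\sum_w w\bigr)\circ f_1$, meaning $\mathcal{O}$ is determined by $f_1$. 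The same argument applies to the coefficient of $w_0$, or of any fixed Weyl element.

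First, I would show that $\mathcal{O}:=\Delta_q(\zz^{-1})^{-1}\sum_{w}T_w$ satisfies $s_i\mathcal{O}=\mathcal{O}$. By Corollary~\ref{cor:Tcirc}, $s_iT_\circ=\frac{\xi_{\alpha_i}(\zz)}{\xi_{\alpha_i}(\zz^{-1})}T_\circ$. Since $s_i$ permutes $\Sigma^\nd_+\setminus\{\alpha_i\}$ and sends $\alpha_i\mapsto-\alpha_i$, and using $\xi_{w\alpha}(\zz)=\xi_\alpha(w^{-1}\zz)$ and $\xi_{-\alpha}(\zz)=\xi_\alpha(\zz^{-1})$, we get
\[ s_i\bigl(\Delta_q(\zz^{-1})\bigr)=\Delta_q(\zz^{-1})\,\frac{\xi_{\alpha_i}(\zz)}{\xi_{\alpha_i}(\zz^{-1})}. \]
This cancels the factor in Corollary~\ref{cor:Tcirc}. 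Lemma~\ref{lem:Dinv} gives the same invariance for $D_{w_0}$. So it suffices to compare one coefficient of the two operators, viewed in the twisted group algebra $\mathbb{C}(\Lambda)\rtimes W$, where the $w$ are linearly independent over $\mathbb{C}(\Lambda)$.

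The cleanest coefficient to compare is that of $w_0$, since only the top term contributes. In $T_w$ with $w=s_{i_1}\cdots s_{i_k}$ reduced, the coefficient of $w$ is $\prod_j \lambda_{\alpha_{i_j}}(s_{i_{j-1}}\cdots s_{i_1}\text{-twisted }\zz)$. Concretely, for $w_0$ it is $\prod_{j}\lambda_{\alpha_{i_j}}\bigl((s_{i_1}\cdots s_{i_{j-1}})^{-1}\zz\bigr)=\prod_{\beta\in\Sigma^\nd_+}\lambda_\beta(\zz)$, because the roots $s_{i_1}\cdots s_{i_{j-1}}\alpha_{i_j}$ run over $\Sigma^\nd_+$. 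Only $T_{w_0}$ contributes to the $w_0$ coefficient, since $T_w$ involves only group elements $\le w$ in Bruhat order. This gives $\prod_\beta \xi_\beta(\zz^{-1})/\eta_\beta(\zz)=\Delta_q(\zz^{-1})\prod_\beta\eta_\beta(\zz)^{-1}$. For $D_{w_0}$, with $D_i=\eta_{\alpha_i}(\zz)^{-1}s_i+\dots$, the same computation gives a $w_0$ coefficient of $\prod_\beta\eta_\beta(\zz)^{-1}$. Thus the coefficients of $w_0$ agree. By the invariance argument, applied to the $w_0$ coefficient, which determines the rest via $f_{s_iw}(\zz)=f_w(s_i\zz)$ and transitivity of $W$, the two operators are equal.

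The main obstacle is the bookkeeping in the leading-coefficient computation: getting the twisting order right so that the arguments become $\lambda_\beta(\zz)$ for the positive roots $\beta$, rather than their negatives. This relies on $\lambda_{w\alpha}(\zz)=\lambda_\alpha(w^{-1}\zz)$. A careful induction on length, tracking $s_{i_1}\cdots s_{i_{j-1}}\alpha_{i_j}>0$, handles it. Alternatively, one can avoid an explicit sign choice by noting that both computations produce the same product structure and differ only by the $\xi$ versus $\eta$ numerators.
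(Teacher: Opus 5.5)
Your proposal is correct and follows the paper's proof essentially step for step. You divide $T_\circ$ by $\Delta_q(\zz^{-1})$ to obtain an $s_i$-invariant operator via Corollary~\ref{cor:Tcirc}, use Lemma~\ref{lem:Dinv} for $D_{w_0}$, note that invariance forces $f_{s_iw}(\zz)=f_w(s_i\zz)$ so that a single coefficient determines the operator, and match the $w_0$-coefficients $\prod_{\beta\in\Sigma^\nd_+}\eta_\beta(\zz)^{-1}$ read off from a reduced word of $w_0$. One caution about your closing aside: the sign bookkeeping cannot be skipped entirely, because the ratio of the two top coefficients is $\Delta_q(\zz^{-1})$ rather than $\Delta_q(\zz)$ precisely because the roots $s_{i_1}\cdots s_{i_{j-1}}\alpha_{i_j}$ are positive, which your main computation correctly establishes.
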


In \cite{CS80}, Casselman and Shalika write $\zeta$ for our $\Delta_q$, but we use the latter because of its connection to a $q$-deformation of the Weyl denominator.

\begin{proof}
  Recall that $s_i$ sends $\alpha_i$ to $-\alpha_i$ while permuting the remaining positive roots.
  Since $\xi_{\alpha}(\mathbf{z}^{- 1}) = \xi_{-\alpha}(\mathbf{z})$ we thus get that 
  $$s_i \Delta_q(\zz^{-1}) = \frac{\xi_{\alpha_i}(\mathbf{z})}{\xi_{\alpha_i} (\mathbf{z}^{-1})} \Delta_q(\zz^{-1}) s_i.$$
  
  Let $\tilde{T}_{\circ} := \frac{1}{\Delta_q(\zz^{-1})} T_{\circ}$ where $T_\circ = \sum_{w \in W} T_w$. Then
  Corollary~\ref{cor:Tcirc} gives that $s_i \tilde{T}_{\circ} =
  \tilde{T}_{\circ}$, and thus $w \tilde{T}_{\circ} = \tilde{T}_{\circ}$ for
  all $w \in W$. Similarly, by Lemma~\ref{lem:Dinv} we get that $w D_{w_0} =
  D_{w_0}$ for all $w \in W$.
  
  We expand $\tilde{T}_{\circ}$ and $D_{w_0}$ into linear combinations of
  operators $w \in W$ with coefficients in $\mathbb{C}(\Lambda)$, similar to how
  $T_i$ and $D_i$ are expressed as linear combinations of $s_i$ and $1_W$, as follows:
  \[ \tilde{T}_{\circ} = \sum_{w \in W} t_w w \quad \text{ and } \quad D_{w_0} = \sum_{w \in W}
     d_w w .\]
  Since $w  \tilde{T}_{\circ} = \tilde{T}_{\circ}$ for $w \in W$ we get that
  \[\sum_{y \in W} t_y y = \tilde{T}_{\circ} = w \tilde{T}_\circ = \sum_{y \in W} w t_y y = \sum_{y \in W} {}^w (t_y) w y,\]
  where ${}^w (t_y)$ is the left-action of $w$ on $t_y \in \mathbb{C}(\Lambda)$. Thus $t_{w y} = {}^w (t_y)$ and in
  particular $t_{w'} = {}^{w' w_0} (t_{w_0})$. Similarly, $d_{w'} = {}^{w'
  w_0} (d_{w_0})$. Hence, it remains to show that $t_{w_0} = d_{w_0}$.
  
  Let $w_0 = s_{i_1} \cdots s_{i_k}$ be a reduced word. The only contribution to
  $t_{w_0}$ in $\tilde{T}_{\circ} = \frac{1}{\Delta_q(\zz^{-1})} \sum_{w \in W} T_w$ comes
  from $T_{w_0}$. Thus, since $T_i = \lambda_{\alpha_i}(\zz) s_i - (c_{\alpha_i}(\zz) - v_i)$, we get that $t_{w_0}$ equals $\frac{1}{\Delta_q(\zz^{-1})}$ times the
  product of all the $\lambda_{\alpha_{i}}(\zz)s_{i}$ terms appearing in $T_{w_0} =
  T_{i_1} \cdots T_{i_k}$
  
  Since $^{s_j}\bigl(\lambda_{\alpha_i}(\mathbf{z})\bigr) = \lambda_{s_j\alpha_i}(\mathbf{z})$ we get that
  \begin{equation*}
    t_{w_0} = \frac{1}{\Delta_q(\zz^{-1})} \lambda_{\alpha_{i_1}}\!(\zz)\, \lambda_{s_{i_1} \alpha_{i_2}}\!(\zz)\, \lambda_{s_{i_1} s_{i_2} \alpha_{i_3}}\!(\zz)\, \cdots \lambda_{s_{i_1} \cdots s_{i_{k - 1}}\alpha_{i_k}}\!(\zz) = \frac{1}{\Delta_q(\zz^{-1})} \prod_{\alpha \in \Sigma^\nd_+} \lambda_{\alpha}(\zz)
  \end{equation*}  
  where the last equality follows from the fact that these roots enumerate $\Sigma^\nd_+$ by Bourbaki \cite[Ch VI §1 Corollaire 2]{Bour02}.

  Recall that $D_i = \frac{1}{\xi_{\alpha_i}(\zz^{-1})} (T_i+1)$.
  Thus
  \[ D_{w_0} = \frac{1}{\xi_{\alpha_{i_1}}(\zz^{-1})} (T_{i_1} + 1) \frac{1}{\xi_{\alpha_{i_2}}(\zz^{-1})} (T_{i_2} + 1) \cdots \frac{1}{\xi_{\alpha_{i_k}}(\zz^{-1})}  (T_{i_k} + 1) \]
  and the coefficient $d_{w_0}$ can be read off from the term which is the product of the $\lambda_{\alpha_{i}}s_i$ terms in all the $T_{i}+1$ factors together with the prefactors $\frac{1}{\xi_{\alpha_{i}}(\zz^{-1})}$. Since $\lambda_\alpha(\zz)/\xi_\alpha(\zz^{-1}) = 1/\eta_\alpha(\zz)$ and $^{s_j} \eta_{\alpha_i} = \eta_{s_j \alpha_i}$ we get that
  \begin{equation*} 
    d_{w_0} = \frac{1}{\eta_{\alpha_{i_1}}(\zz)} \cdot \frac{1}{\eta_{s_{i_1} \alpha_{i_2}}(\zz)} \cdots \frac{1}{\eta_{s_{i_1} \cdots s_{i_{k - 1}} \alpha_{i_k}}(\zz)} 
    = \prod_{\alpha\in \Sigma^{\nd}_+} \frac{1}{\eta_{\alpha}(\zz)}
  \end{equation*}
which equals $t_{w_0}$ since $\frac{1}{\Delta_q(\zz^{-1})} \prod_{\alpha} \lambda_\alpha(\zz)= \prod_{\alpha} \lambda_\alpha(\zz) / \xi_{\alpha}(\mathbf{z}^{-1}) = \prod_{\alpha} 1/\eta_\alpha(\zz)$ where the products are over $\alpha \in \Sigma^\nd_+$.
\end{proof}

The idea of proving the operator equality in Theorem~\ref{thm:TwDw0} by showing that $t_{w_0} = d_{w_0}$ was inspired by the proof of~\cite[Theorem~3]{CGP}.

From Theorem~\ref{thm:TwDw0} we can recover the Casselman--Shalika formula for the spherical Whittaker function $\phi_\circ(\mathbf{z}, g) = \delta(g)^{1/2}\Omega_{\zz^{-1}}\bigl(\pi_{\zz^{-1}}(g)\Phi_\circ^{\zz^{-1}}\bigr) = \sum_{w \in W} \phi_w(\zz;g)$ which computes all its values.
Indeed, in Section~\ref{sec:IW} we showed that the spherical Whittaker function is completely determined by its values $\phi_\circ(\zz;\varpi^{-\lambda})$ for dominant $\lambda \in \Lambda$ and the Casselman--Shalika formula computes these values as follows.

\begin{Thm}[{The Casselman--Shalika formula~\cite[Theorem~5.4]{CS80}}] 
  \label{thm:CS}
  Let $\lambda \in \Lambda$ be a dominant weight.
  Then,
  \begin{equation*}
    \phi_\circ(\zz; \varpi^{-\lambda}) = \frac{\Delta_q(\zz^{-1})}{\Delta_1(\zz)} \sum_{w\in W} (-1)^{\ell(w)} \prod_{\substack{\alpha>0\\w\alpha <0}} \zz^{\alpha^\vee} \cdot w^{-1}w_0 \mathbf{z}^\lambda
  \end{equation*}
  where $\alpha > 0$ means that $\alpha \in \Sigma^\nd_+$, where $\Delta_q(\zz) = \prod_{\alpha \in \Sigma^\nd_+} \xi_{\alpha}(\mathbf{z})$ and $d_\alpha = \begin{smallcases} 1 & \text{if } 2 \alpha \not\in \Sigma  \\ 2 & \text{if } 2 \alpha \in \Sigma \end{smallcases}$.
\end{Thm}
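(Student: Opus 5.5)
We start from the decomposition $\phi_\circ(\zz;g)=\sum_{w\in W}\phi_w(\zz;g)$ and apply Theorem~\ref{thm:recursion} along reduced words. Take $w=s_{i_1}\cdots s_{i_k}$ reduced. Each step $s_{i_j}(s_{i_{j+1}}\cdots s_{i_k})$ is length-increasing, so induction on $\ell(w)$ gives $\phi_w(\zz;g)=T_w\,\phi_1(\zz;g)$. Here the operators act on the $\zz$-variable with $g$ fixed. Summing over $W$ gives
\[\phi_\circ(\zz;\varpi^{-\lambda}) = \Bigl(\sum_{w\in W}T_w\Bigr)\phi_1(\zz;\varpi^{-\lambda}).\]
Proposition~\ref{prop:base-case} gives $\phi_1(\zz;\varpi^{-\lambda})=\zz^\lambda$ for dominant $\lambda$. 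Since $g=\varpi^{-\lambda}$ is fixed, $\phi_1(\cdot;\varpi^{-\lambda})$ is the function $\zz\mapsto\zz^\lambda$, and the $T_i$ act on it directly. Theorem~\ref{thm:TwDw0} then gives
\[\phi_\circ(\zz;\varpi^{-\lambda})=\Delta_q(\zz^{-1})\,D_{w_0}\zz^\lambda.\]

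The remaining task is to evaluate $D_{w_0}\zz^\lambda$ explicitly. In the proof of Theorem~\ref{thm:TwDw0} we wrote $D_{w_0}=\sum_{y}d_y\,y$, with $d_{w_0}=\prod_{\alpha>0}\eta_\alpha(\zz)^{-1}$ and $d_y={}^{yw_0}(d_{w_0})$. Applying this to $\zz^\lambda$ gives
\[D_{w_0}\zz^\lambda=\sum_{y\in W}{}^{yw_0}\Bigl(\prod_{\alpha>0}\frac{1}{1-\zz^{\alpha^\vee}}\Bigr)\, y\zz^\lambda.\]
Write $u=yw_0$, so $y=uw_0$. Then ${}^{u}\prod_{\alpha>0}(1-\zz^{\alpha^\vee})=\prod_{\alpha>0}(1-\zz^{u\alpha^\vee})$.

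For each positive $\alpha$ with $u\alpha<0$, we rewrite $1-\zz^{-\beta^\vee}=-\zz^{-\beta^\vee}(1-\zz^{\beta^\vee})$ with $\beta=-u\alpha$. This shows
\[\prod_{\alpha>0}(1-\zz^{u\alpha^\vee})=(-1)^{\ell(u)}\prod_{\beta>0,\,u^{-1}\beta<0}\zz^{-\beta^\vee}\cdot\Delta_1(\zz),\]
where $\Delta_1(\zz)=\prod_{\alpha>0}(1-\zz^{\alpha^\vee})$ is $\Delta_q$ at $q=1$. Therefore
\[D_{w_0}\zz^\lambda=\frac{1}{\Delta_1(\zz)}\sum_{u}(-1)^{\ell(u)}\prod_{\beta>0,\,u^{-1}\beta<0}\zz^{\beta^\vee}\cdot uw_0\zz^\lambda.\]
Finally, reindex by $w=u^{-1}$. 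Then $(-1)^{\ell(u)}=(-1)^{\ell(w)}$, the condition $u^{-1}\beta<0$ becomes $w\beta<0$, and $uw_0\zz^\lambda=w^{-1}w_0\zz^\lambda$. This is exactly the stated formula.

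The main obstacle is bookkeeping rather than anything conceptual. We need to track signs and inverses consistently: the convention that the $W$-action is $w\zz^\lambda=\zz^{w\lambda}$, that $\eta_\alpha$ uses $\alpha^\vee$ with no $d_\alpha$ factor, and the step $u\leftrightarrow u^{-1}$. The identity $d_y={}^{yw_0}(d_{w_0})$ from the proof of Theorem~\ref{thm:TwDw0} also needs checking here, since that proof derived it from the $W$-invariance $wD_{w_0}=D_{w_0}$.

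A smaller point is the validity of applying Theorem~\ref{thm:recursion} at fixed $g$. This holds because its proof is a pointwise identity in $\zz$, valid as an identity of rational functions after meromorphic continuation.
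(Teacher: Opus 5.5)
Your proposal is correct and follows the paper's proof essentially step for step. You reduce to $\Delta_q(\zz^{-1})D_{w_0}\zz^\lambda$ via $\phi_w=T_w\phi_1$, Proposition~\ref{prop:base-case} and Theorem~\ref{thm:TwDw0}, expand $D_{w_0}=\sum_y d_y\,y$ with $d_y={}^{yw_0}(d_{w_0})$, and rewrite the factors for negative roots as $-\zz^{\beta^\vee}/(1-\zz^{\beta^\vee})$; the only cosmetic difference is that you reindex in two steps ($u=yw_0$, then $w=u^{-1}$) where the paper substitutes $y=w^{-1}w_0$ directly.
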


By $\Delta_1(\zz)$ we mean $\Delta_q(\zz)|_{q=1}$ and since $\xi_{\alpha}(\zz)|_{q=1} = \eta_\alpha(\zz)$ we have that $\Delta_1(\zz) = \prod_{\alpha>0} 1/\eta_{\alpha}(\zz)$.
(The changes between the above formula and Theorem 5.4 of Casselman and Shalika \cite{CS80} are due to the changes in convention described in Section \ref{sec:IW}.)
We observe that the only $p$-adic integral we need to compute is the base case in Proposition~\ref{prop:base-case} which reduces to a simple volume integral.
The rest follows from the Demazure operators.

\begin{proof}
  We have that $\phi_\circ(\zz;g) = \sum_{w\in W} \phi_w(\zz;g) = \sum_{w \in W} T_w \phi_1(\zz;g)$.
  Let $\lambda \in \Lambda$ be a dominant weight.
  Then from Proposition~\ref{prop:base-case} we have that $\phi_1(\zz; \varpi^{-\lambda}) = \zz^\lambda$.
  Since $\sum_{w \in W} T_w \ = \Delta_q(\zz^{-1}) D_{w_0}$ from Theorem~\ref{thm:TwDw0} we thus get that $\phi_\circ(\zz; \varpi^{-\lambda}) = \sum_{w \in W} T_w \zz^\lambda = \Delta_q(\zz^{-1}) D_{w_0} \zz^\lambda$.

  It remains to evaluate $D_{w_0}\zz^{\lambda}$. 
  In the proof of Theorem~\ref{thm:TwDw0} we showed that $D_{w_0} = \sum_{w \in W} d_w w$ with coefficients $d_w = {}^{ww_0}(d_{w_0})$ where $d_{w_0} =  \prod_{\alpha > 0} \frac{1}{\eta_{\alpha}(\zz)}$.
  After changing the summation variable from $w$ to $w^{-1}w_0$ we get
  \begin{equation*}
    D_{w_0}\zz^{\lambda} = \sum_{w\in W} \prod_{\alpha > 0} \frac{1}{{}^{w^{-1}\!}\bigl(\eta_{\alpha}(\zz)\bigr)} \cdot w^{-1}w_0 \zz^\lambda 
    = \sum_{w\in W} \prod_{\alpha > 0} \frac{1}{\eta_{w^{-1}\alpha}(\zz)} \cdot w^{-1}w_0 \zz^\lambda.
  \end{equation*}
  We rewrite the coefficient and decompose it as follows
  \begin{equation*}
  \prod_{\alpha > 0} \frac{1}{\eta_{w^{-1}\alpha}(\zz)} 
  = \!\!\! \prod_{\alpha \in w^{-1} \Sigma^\nd_+} \frac{1}{\eta_{\alpha}(\zz)} 
  = \prod_{\substack{\alpha \in \Sigma^\nd \\ w\alpha > 0}} \frac{1}{\eta_{\alpha}(\zz)} 
  = \prod_{\substack{\alpha > 0 \\ w\alpha > 0}} \frac{1}{\eta_{\alpha}(\zz)} \cdot \prod_{\substack{\alpha < 0 \\ w\alpha > 0}} \frac{1}{\eta_{\alpha}(\zz)} 
  = \prod_{\substack{\alpha > 0 \\ w\alpha > 0}} \frac{1}{\eta_{\alpha}(\zz)} \cdot \prod_{\substack{\alpha > 0 \\ w\alpha < 0}} \frac{1}{\eta_{-\alpha}(\zz)}
  \end{equation*} 
  Since $\frac{1}{\eta_{-\alpha}(\zz)} = \frac{-\zz^{\alpha^\vee}}{\eta_\alpha(\zz)}$ we get
  \begin{equation*}
    \prod_{\alpha > 0} \frac{1}{\eta_{w^{-1}\alpha}(\zz)}
    = \prod_{\substack{\alpha > 0 \\ w\alpha > 0}} \frac{1}{\eta_{\alpha}(\zz)} \cdot \prod_{\substack{\alpha > 0 \\ w\alpha < 0}} \frac{-\zz^{\alpha^\vee}}{\eta_\alpha(\zz)}
    = \prod_{\alpha > 0} \frac{1}{\eta_\alpha(\zz)} \cdot \prod_{\substack{\alpha > 0 \\ w\alpha < 0}} (-\zz^{\alpha^\vee})
    = \frac{1}{\Delta_1(\zz)} \cdot (-1)^{\ell(w)}\! \prod_{\substack{\alpha > 0 \\ w\alpha < 0}} \zz^{\alpha^\vee}.
  \end{equation*}
\end{proof}

\bibliographystyle{abbrvurl} 
\bibliography{sources.bib}

\begin{thebibliography}{10}

\bibitem{Bour02}
N.~Bourbaki.
\newblock {\em Lie groups and {L}ie algebras. {C}hapters 4--6}.
\newblock Elements of Mathematics (Berlin). Springer-Verlag, Berlin, 2002.
\newblock Translated from the 1968 French original by Andrew Pressley.
\newblock \href {https://doi.org/10.1007/978-3-540-89394-3}
  {\path{doi:10.1007/978-3-540-89394-3}}.

\bibitem{BBBG:Demazure}
B.~Brubaker, V.~Buciumas, D.~Bump, and H.~P.~A. Gustafsson.
\newblock Colored five-vertex models and {D}emazure atoms.
\newblock {\em J. Combin. Theory Ser. A}, 178:Paper No. 105354, 48, 2021.
\newblock \href {https://doi.org/10.1016/j.jcta.2020.105354}
  {\path{doi:10.1016/j.jcta.2020.105354}}.

\bibitem{BBBG24}
B.~Brubaker, V.~Buciumas, D.~Bump, and H.~P.~A. Gustafsson.
\newblock Colored vertex models and {I}wahori {W}hittaker functions.
\newblock {\em Selecta Math. (N.S.)}, 30(4):Paper No. 78, 58, 2024.
\newblock \href {https://doi.org/10.1007/s00029-024-00950-6}
  {\path{doi:10.1007/s00029-024-00950-6}}.

\bibitem{BBL}
B.~Brubaker, D.~Bump, and A.~Licata.
\newblock Whittaker functions and {D}emazure operators.
\newblock {\em J. Number Theory}, 146:41--68, 2015.
\newblock \href {https://doi.org/10.1016/j.jnt.2014.01.001}
  {\path{doi:10.1016/j.jnt.2014.01.001}}.

\bibitem{BT1}
F.~Bruhat and J.~Tits.
\newblock Groupes r\'eductifs sur un corps local.
\newblock {\em Inst. Hautes \'Etudes Sci. Publ. Math.}, (41):5--251, 1972.
\newblock URL: \url{http://www.numdam.org/item?id=PMIHES_1972__41__5_0}.

\bibitem{BT2}
F.~Bruhat and J.~Tits.
\newblock Groupes r\'eductifs sur un corps local. {II}. {S}ch\'emas en groupes.
  {E}xistence d'une donn\'ee radicielle valu\'ee.
\newblock {\em Inst. Hautes \'Etudes Sci. Publ. Math.}, (60):197--376, 1984.
\newblock URL: \url{http://www.numdam.org/item?id=PMIHES_1984__60__5_0}.

\bibitem{Bump}
D.~Bump.
\newblock {\em Lie groups}, volume 225 of {\em Graduate Texts in Mathematics}.
\newblock Springer, New York, second edition, 2013.
\newblock \href {https://doi.org/10.1007/978-1-4614-8024-2}
  {\path{doi:10.1007/978-1-4614-8024-2}}.

\bibitem{C80}
W.~Casselman.
\newblock The unramified principal series of {${ p}$}-adic groups. {I}. {T}he
  spherical function.
\newblock {\em Compositio Math.}, 40(3):387--406, 1980.
\newblock URL: \url{http://www.numdam.org/item?id=CM_1980__40_3_387_0}.

\bibitem{CCH}
W.~Casselman, J.~E. Cely, and T.~Hales.
\newblock The spherical {H}ecke algebra, partition functions, and motivic
  integration.
\newblock {\em Trans. Amer. Math. Soc.}, 371(9):6169--6212, 2019.
\newblock \href {https://doi.org/10.1090/tran/7465}
  {\path{doi:10.1090/tran/7465}}.

\bibitem{CS80}
W.~Casselman and J.~Shalika.
\newblock The unramified principal series of {$p$}-adic groups. {II}. {T}he
  {W}hittaker function.
\newblock {\em Compositio Math.}, 41(2):207--231, 1980.
\newblock URL: \url{http://www.numdam.org/item?id=CM_1980__41_2_207_0}.

\bibitem{CGP}
G.~Chinta, P.~E. Gunnells, and A.~Pusk\'as.
\newblock Metaplectic {D}emazure operators and {W}hittaker functions.
\newblock {\em Indiana Univ. Math. J.}, 66(3):1045--1064, 2017.
\newblock \href {https://doi.org/10.1512/iumj.2017.66.6068}
  {\path{doi:10.1512/iumj.2017.66.6068}}.

\bibitem{GK}
N.~Gurevich and E.~Karasiewicz.
\newblock The twisted {S}atake transform and the {C}asselman-{S}halika formula
  for quasi-split groups.
\newblock {\em Int. Math. Res. Not. IMRN}, (14):1--32 [11148--11179 on table of
  contents], 2022.
\newblock \href {https://doi.org/10.1093/imrn/rnab346}
  {\path{doi:10.1093/imrn/rnab346}}.

\bibitem{Haines:intertwiners}
T.~J. Haines.
\newblock Intertwiners for unramified groups.
\newblock Expository note.
  \url{https://www.math.umd.edu/~tjh/intertwiners_unram_grps.pdf}.

\bibitem{Haines:Bernstein}
T.~J. Haines.
\newblock The stable {B}ernstein center and test functions for {S}himura
  varieties.
\newblock In {\em Automorphic forms and {G}alois representations. {V}ol. 2},
  volume 415 of {\em London Math. Soc. Lecture Note Ser.}, pages 118--186.
  Cambridge Univ. Press, Cambridge, 2014.

\bibitem{Lus89}
G.~Lusztig.
\newblock Affine {H}ecke algebras and their graded version.
\newblock {\em J. Amer. Math. Soc.}, 2(3):599--635, 1989.
\newblock \href {https://doi.org/10.2307/1990945} {\path{doi:10.2307/1990945}}.

\bibitem{Lus03}
G.~Lusztig.
\newblock {\em Hecke algebras with unequal parameters}, volume~18 of {\em CRM
  Monograph Series}.
\newblock American Mathematical Society, Providence, RI, 2003.
\newblock \href {https://doi.org/10.1090/crmm/018}
  {\path{doi:10.1090/crmm/018}}.

\bibitem{Mac03}
I.~G. Macdonald and A.-M. Aubert.
\newblock {\em Spherical functions on a group of {$p$}-adic type}, volume 2392
  of {\em Lecture Notes in Mathematics}.
\newblock Springer, Cham, second edition, 2026.
\newblock \href {https://doi.org/10.1007/978-3-032-15671-6}
  {\path{doi:10.1007/978-3-032-15671-6}}.

\bibitem{Re}
M.~Reeder.
\newblock On certain {I}wahori invariants in the unramified principal series.
\newblock {\em Pacific J. Math.}, 153(2):313--342, 1992.
\newblock URL: \url{http://projecteuclid.org/euclid.pjm/1102635836}.

\bibitem{Soll}
M.~Solleveld.
\newblock Parameters of {H}ecke algebras for {B}ernstein components of
  {$p$}-adic groups.
\newblock {\em Indag. Math. (N.S.)}, 36(1):124--170, 2025.
\newblock \href {https://doi.org/10.1016/j.indag.2024.04.005}
  {\path{doi:10.1016/j.indag.2024.04.005}}.

\bibitem{Tamir}
B.~Tamir.
\newblock On {$L$}-functions and intertwining operators for unitary groups.
\newblock {\em Israel J. Math.}, 73(2):161--188, 1991.
\newblock \href {https://doi.org/10.1007/BF02772947}
  {\path{doi:10.1007/BF02772947}}.

\bibitem{Tits}
J.~Tits.
\newblock Reductive groups over local fields.
\newblock In {\em Automorphic forms, representations and {$L$}-functions
  ({P}roc. {S}ympos. {P}ure {M}ath., {O}regon {S}tate {U}niv., {C}orvallis,
  {O}re., 1977), {P}art 1}, volume XXXIII of {\em Proc. Sympos. Pure Math.},
  pages 29--69. Amer. Math. Soc., Providence, RI, 1979.

\end{thebibliography}

\end{document}